\newcommand{\norow}{\ensuremath{\underline{\lambda}}} 
\newcommand{\F}[1]{\ensuremath{F(#1)}} 
\newcommand{\St}{\ensuremath{\sigma}}
\theoremstyle{plain}
\newtheorem{thm}{Theorem}[section]
\newtheorem{cor}[thm]{Corollary}
\newtheorem{lem}[thm]{Lemma}
\newtheorem{prop}[thm]{Proposition}
\theoremstyle{definition}
\newtheorem{defn}[thm]{Definition}
\theoremstyle{remark}
\newtheorem{rem}[thm]{Remark}
\newcommand{\B}{\ensuremath{\beta}}
\begin{document}

\title{Boolean complexes for Ferrers graphs}

\author[A.~Claesson]{Anders Claesson}
\author[S.~Kitaev]{Sergey Kitaev}
\address{School of Computer Science, Reykjav\'ik University, Iceland}
\email{anders@ru.is\\ sergey@ru.is}

\author[K.~Ragnarsson]{K\'ari Ragnarsson}
\author[B.~E.~Tenner]{Bridget Eileen Tenner}
\address{Department of Mathematical Sciences, DePaul University, Chicago, Illinois, USA}
\email{kragnars@math.depaul.edu\\ bridget@math.depaul.edu}

\thanks{Claesson and Kitaev were supported by the Icelandic Research Fund, grant no. 090038011.}

\keywords{Ferrers graph, Ferrers shape, partition,
Legendre-Stirling numbers, Genocchi numbers of the second kind,
Stirling number of the second kind, (complete) bipartite graph,
homotopy type, boolean complex, boolean number, complexity}

\subjclass[2000]{Primary 05A15; Secondary 55P15, 05C99, 05A19}

\begin{abstract}
In this paper we provide an explicit formula for calculating the boolean 
number of a Ferrers graph. By previous work of the last two authors, this 
determines the homotopy type of the boolean complex of the graph.
Specializing to staircase shapes, we show that the boolean numbers 
of the associated Ferrers graphs are the Genocchi numbers of the second
kind, and obtain a relation between the Legendre-Stirling numbers and the 
Genocchi numbers of the second kind.  In another application, we compute 
the boolean number of a complete bipartite graph, corresponding to a rectangular 
Ferrers shape, which is expressed in terms of the Stirling numbers of the 
second kind. Finally, we analyze the complexity of calculating the boolean 
number of a Ferrers graph using these results and show that it is a significant
improvement over calculating by edge recursion.
\end{abstract}

\maketitle
\thispagestyle{empty}

\section{Introduction}

Ferrers shapes, or Young shapes or partitions, are classical
combinatorial objects arising in a variety of contexts including
Schubert varieties, symmetric functions, hypergeometric series,
permutation statistics, quantum mechanical operators, and inverse rook
problems (see references in~\cite{CN}).  To such an object, one can
relate a bipartite graph known as a \emph{Ferrers graph}, as
introduced in \cite{EW}.

Let $\lambda = (\lambda_1, \ldots, \lambda_r)$ be a partition, where
$\lambda_1 \ge \cdots \ge \lambda_r \ge 0$.  The
associated bipartite Ferrers graph has vertices $\{x_1, \ldots, x_r\} \sqcup \{y_1, \ldots, y_{\lambda_1}\}$, and edges $\big\{\{x_i, y_j\} : \lambda_i \ge j\big\}$.  In
    particular, vertex $x_i$ has degree $\lambda_i$.
    A Ferrers graph and its
    associated Ferrers shape are depicted in Figure~\ref{fig1}.  Note that if $\lambda_i=0$ then $x_i$ is an isolated vertex in this graph.

\setlength{\unitlength}{4mm}
\begin{figure}[h]
\begin{center}
\begin{tikzpicture}
\foreach \x in {1,2,3} {\draw (1,1) -- (\x,2);}
\foreach \x in {1,2,3} {\draw (2,1) -- (\x,2);}
\foreach \x in {1,2} {\draw (3,1) -- (\x,2);}
\foreach \x in {1,2} {\draw (4,1) -- (\x,2);}
\foreach \x in {1,2,3,4} {\fill[black] (\x,1) circle (2pt);}
\foreach \x in {1,2,3} {\fill[black] (\x,2) circle (2pt);}
\foreach \i in {1,2,3,4} {\draw (\i,1) node[below] {$y_{\i}$};}
\foreach \i in {1,2,3} {\draw (\i,2) node[above] {$x_{\i}$};}
\end{tikzpicture}
\hspace{.5in}
\begin{tikzpicture}[scale=.6]
\draw (1,1) -- (3,1);
\draw (1,2) -- (5,2);
\draw (1,3) -- (5,3);
\draw (1,4) -- (5,4);
\draw (1,1) -- (1,4);
\draw (2,1) -- (2,4);
\draw (3,1) -- (3,4);
\draw (4,2) -- (4,4);
\draw (5,2) -- (5,4);
\end{tikzpicture}
\end{center}
\caption{A Ferrers graph and its associated Ferrers shape $\lambda=(4,4,2)$.}\label{fig1}
\end{figure}
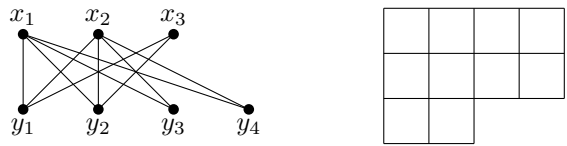

A selection of enumerative properties of Ferrers graphs are studied
in~\cite{EW}, where the graphs are introduced. In particular,
expressions for the number of spanning trees, the number of
Hamiltonian paths, the chromatic polynomial, and the chromatic
symmetric function are given.

In~\cite{EK}, the authors find the number of independent sets of a
Ferrers graph, and give relations between the set of independent sets
of a Ferrers graph and certain combinatorial objects. Moreover, it is
shown in~\cite{EK} that the simplicial complex related to the set of
independent sets of a non-rectangular Ferrers graph is
simple-homotopic to a point, whereas it is simple-homotopic to two
points in the case of a rectangular Ferrers graph.

Monomial and toric ideals associated to Ferrers graphs are studied
in~\cite{CN}. In particular, it is shown that the edge ideal of a
Ferrers graph, called the Ferrers ideal, has a $2$-linear minimal free
resolution.  That is, it defines a small subscheme, which is proved to
characterize Ferrers graphs among bipartite graphs.

In this paper, we study the homotopy type of the boolean complexes of
Ferrers graphs. Roughly speaking, the boolean complex of a graph $G$
is the complex of words on the vertex set of $G$, without repeated
letters, where two letters commute if the corresponding vertices are
not connected by an edge in $G$. Boolean complexes were introduced in
\cite{ragnarsson tenner}, where it is shown that the boolean complex
of a graph $G$ on $n$ vertices always has the homotopy type of a wedge
of spheres of dimension $n-1$. The homotopy type is therefore
determined by the number of spheres in the wedge sum, which we denote
$\B(G)$, and refer to as the \emph{boolean number} of $G$.

The paper is organized as follows. In Section~\ref{sec:previous stuff}
we recall the definitions and results on Boolean complexes from
\cite{ragnarsson tenner} that will be needed in this paper, as well as 
more general functions related to the boolean number and some well-known 
sequences that arise in the course of this article. In
Section~\ref{general} we provide a recursion for calculating the
boolean number of an arbitrary Ferrers graph in terms of certain
truncated shapes (Theorem~\ref{thm:Recursion}).  This formula is used
in Section~\ref{formula-general} to derive an identity
(Theorem~\ref{thm:PowerSums}) for the boolean number in terms of
certain recursively defined coefficients $c_{\lambda}(n,j)$. 
More precisely, if
$F$ is the Ferrers graph associated to a Ferrers shape $\lambda =
(\lambda_1,\ldots,\lambda_r)$, then
\[ \B(F) = \sum_{j=0}^r c_\lambda(r,j) \cdot j^{\lambda_r}. \]

We restrict to Ferrers graphs for staircase shapes in
Section~\ref{staircases}, obtaining a connection to the
Legendre-Stirling numbers.  This yields an explicit double sum formula for $\B(F)$ in
this case (Corollary~\ref{cor:staircase}).  Intriguingly, the boolean numbers of staircase shapes are the Genocchi numbers
of the second kind. As a corollary, we find a
relationship between the Legendre-Stirling numbers $\{d(r,j)\}$ and
the Genocchi numbers of the second kind $\{g(r)\}$ that seems to be
new in the literature:
$$g(r) = \sum\limits_{j=1}^r (-1)^{r+j}(j!)^2 \cdot d(r,j).
$$

Applying Theorem~\ref{thm:Recursion} to complete bipartite graphs
$K_{r,k}$ in Section~\ref{complete}, we express $\B(K_{r,k})$ in terms of Stirling numbers of the second kind:
$$\B(K_{r,k}) = \sum_{j=1}^{r}(-1)^{r-j}j! \begin{Bmatrix} r+1 \\ j+1 \end{Bmatrix}j^k.$$

Finally, in Section~\ref{sec:complexity}, we analyze the complexity of 
computing the boolean number of a Ferrers graph using the result of 
Theorem~\ref{thm:PowerSums}, and we show that this is a significant
improvement over calculating the boolean number using the edge-recursion 
from \cite{ragnarsson tenner}.

\section{Background material}\label{sec:previous stuff}

\subsection{Boolean complexes.}\label{some-def}
The motivating object in this paper is the boolean complex of a
graph, as defined in \cite{ragnarsson tenner}.  We now recall the
basic definitions relating to boolean complexes, and
the main result about their homotopy type (Theorem~\ref{key}). The
reader is referred to \cite{ragnarsson tenner} for a thorough
discussion.

\begin{defn}\label{defn:boolean complex}
  Let $G$ be a finite simple graph with vertex set $V$. Construct a
  simplicial poset $\mathbb{B}(G)$ whose elements are equivalence
  classes of strings of distinct elements of $V$, where two strings
  are equivalent if one can be transformed into the other by commuting
  elements that are non-adjacent in $G$. The partial order on
  $\mathbb{B}(G)$ is induced by substring inclusion.  To the poset
  $\mathbb{B}(G)$ we associate the regular cell complex $\Delta(G)$,
  called the \emph{boolean complex} of $G$.  The geometric realization
  of this complex is denoted $|\Delta(G)|$.
\end{defn}

As discussed in \cite{ragnarsson tenner} and reiterated in Theorem~\ref{key} below, 
this $|\Delta(G)|$ is homotopy equivalent to the wedge of $\B(G)$ spheres, and the
boolean number $\B(G)$ of a graph can be calculated recursively using three edge
operations. 

\begin{defn}\label{operations} 
Let $G$ be a (multi-)graph and $e$ an edge in $G$.
\begin{itemize}
\item \emph{Deletion:} $G-e$ is the graph obtained by deleting the edge $e$.
\item \emph{Contraction:} $G\downarrow e$ is the graph obtained by contracting the edge $e$.
\item \emph{Simple contraction:} When $G$ is a simple graph, $G/e$ is the graph obtained from $G\downarrow e$ by removing all loops and redundant edges.
\item \emph{Extraction:} $G-[e]$ is the graph obtained by removing the edge $e$ and its incident vertices.
\end{itemize}
\end{defn}

\begin{defn} 
  For a finite graph $G$, let $|G|$ denote the number of vertices in $G$.
  Also, for $n\geq 1$, let $\delta_n$ be the graph consisting of $n$
  disjoint points.
\end{defn}

\begin{defn}
  For integers $b, r \ge 0$, the notation $b\cdot S^r$ is used to
  indicate a wedge sum of $b$ spheres of dimension $r$.  In
  particular, $0\cdot S^r$ is a single point.
\end{defn}

We can now state the main theorem on the homotopy type of a boolean
complex, as well as another useful result of \cite{ragnarsson tenner}. 
The symbol $\simeq$ is used in the statement to denote
homotopy equivalence.

\begin{thm}[{\cite[Theorem 3.4]{ragnarsson tenner}}]\label{key}
  For every finite simple graph $G$,
  there is an integer $\B(G)$ so that $|\Delta(G)|\simeq \B(G)\cdot
  S^{|G|-1}$.  Moreover, the values $\B(G)$ can be computed recursively using the equations
  \begin{align*}
    \B(G) &= \B(G-e)+\B(G/e)+\B(G-[e])\; 
    \text{ if $e$ is an edge in $G$},\\
    \B(\delta_n) &= 0, \text{ and}\\
    \B(\emptyset) &= 1.
\end{align*}
\end{thm}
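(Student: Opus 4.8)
The plan is to prove both assertions simultaneously by induction on the number of edges of $G$, using the edge recursion as the inductive engine. For the base case $G$ has no edges, so $G = \delta_n$ for some $n \ge 1$ (or $G = \emptyset$). In $\delta_n$ all $n$ vertices are pairwise non-adjacent, so any two strings on the same set of letters are equivalent; the cells of $\mathbb{B}(\delta_n)$ are therefore indexed by the nonempty subsets of the vertex set, ordered by inclusion, and $|\Delta(\delta_n)|$ is the full simplex on $n$ vertices, which is contractible. Since a contractible space is $0\cdot S^{n-1}$, this gives $\B(\delta_n)=0$. For $G=\emptyset$ there are no nonempty strings, so $\Delta(\emptyset)$ is the $(-1)$-sphere, i.e.\ $1\cdot S^{-1}$, giving $\B(\emptyset)=1$. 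These match the stated initial conditions.

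For the inductive step, fix an edge $e=\{a,b\}$ of $G$ and let $n=|G|$. I would first record that $\Delta(G)$ is a regular cell complex of dimension $n-1$, the top cells being the maximal strings, which use every vertex exactly once. The core of the argument is to produce a homotopy decomposition
\[ |\Delta(G)| \;\simeq\; |\Delta(G-e)| \,\vee\, \Sigma|\Delta(G/e)| \,\vee\, \Sigma^2|\Delta(G-[e])|, \]
where $\Sigma$ denotes suspension. Granting this, the inductive hypothesis makes each summand a wedge of spheres: $|\Delta(G-e)|$ of dimension $n-1$; $|\Delta(G/e)|$ of dimension $(n-1)-1=n-2$, which the single suspension raises to $n-1$; and $|\Delta(G-[e])|$ of dimension $(n-2)-1=n-3$, which the double suspension raises to $n-1$. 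Because suspension carries a wedge of $b$ spheres of dimension $r$ to a wedge of $b$ spheres of dimension $r+1$, the whole space is a wedge of $(n-1)$-spheres whose count is exactly $\B(G-e)+\B(G/e)+\B(G-[e])$, yielding both the homotopy type and the recursion at once.

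To build the decomposition I would filter $\Delta(G)$ by the interaction of strings with the edge $e$: the strings using neither $a$ nor $b$ form a subcomplex canonically isomorphic to $\Delta(G-[e])$, while comparing strings that differ only by transposing an adjacent $a$ and $b$ produces the quotient map $|\Delta(G)|\to|\Delta(G-e)|$, and collapsing $a,b$ to a single vertex relates the remaining cells to $\Delta(G/e)$. Each stage of the filtration should be analyzed as a cofiber sequence, the two suspensions arising from the two vertices that the extraction term has lost relative to $G$. The main obstacle is precisely showing that these cofiber sequences split as wedges rather than merely exhibiting the correct subquotients: one must prove that the relevant connecting maps are null-homotopic. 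Since every space involved is a wedge of spheres, such a map is governed by its effect in reduced homology, so the task reduces to checking that the induced maps vanish in the appropriate degree; controlling these maps, together with the bookkeeping of which cells lie in which filtration stage, is the technical heart of the proof. An alternative route that sidesteps the explicit decomposition is to construct an acyclic discrete Morse matching on the face poset of $\Delta(G)$ whose critical cells are exactly the top-dimensional ones; this would immediately give the wedge-of-spheres conclusion, with the recursion recovered by counting critical cells, but verifying acyclicity of the matching in the presence of the commutation relations presents a comparable difficulty.
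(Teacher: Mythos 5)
This theorem is not proved in the paper you were given: it is imported verbatim as Theorem~3.4 of \cite{ragnarsson tenner}, so the only fair comparison is with the proof in that cited source, which proceeds by induction on the number of edges using discrete Morse theory --- explicit acyclic matchings and collapses, with $\B(G)$ critical cells in the top dimension. Your base cases ($\Delta(\delta_n)$ a full simplex, hence contractible; $\Delta(\emptyset)=S^{-1}$ by convention) are correct, your proposed inductive engine
\[ |\Delta(G)| \simeq |\Delta(G-e)| \vee \Sigma|\Delta(G/e)| \vee \Sigma^{2}|\Delta(G-[e])| \]
has the right shape and the right dimension and sphere-count bookkeeping, and your fallback suggestion (an acyclic matching) is in fact essentially the route the cited paper takes. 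But what you have written is an outline whose technical heart is deferred, not a proof.

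There are two genuine gaps. First, the filtration you describe does not yet exist as a filtration by subcomplexes: only the extraction term is honest (strings avoiding both endpoints of $e$ do span a subcomplex isomorphic to $\Delta(G-[e])$), whereas the map toward $\Delta(G-e)$ is not the quotient by a subcomplex --- it merges \emph{pairs of cells of the same dimension} (classes of words differing by transposing adjacent $a$ and $b$), so there is no cofiber sequence to analyze until substantial construction has been done; the relation to $\Delta(G/e)$ is similarly only gestured at. Second, and decisively, your proposed mechanism for splitting --- that a map between wedges of spheres ``is governed by its effect in reduced homology'' --- is false precisely in the range you need. The connecting and attaching maps here go from spheres into wedges of spheres \emph{one dimension lower} (e.g.\ $S^{n-1}\to \bigvee S^{n-2}$ after your suspension bookkeeping), and $\pi_3(S^2)\cong\mathbb{Z}$ (the Hopf map) together with $\pi_m(S^{m-1})\cong\mathbb{Z}/2$ for $m\ge 4$ shows such maps can be essential while inducing zero on homology. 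So null-homotopy of the connecting maps cannot be checked homologically; one must instead exhibit explicit deformation retractions or collapses, which is exactly the content the discrete Morse argument of \cite{ragnarsson tenner} supplies. A minor further slip: a matching whose critical cells are ``exactly the top-dimensional ones'' cannot be acyclic for a nonempty complex --- one also needs a single critical cell in dimension $0$ (or the empty word), alongside the $\B(G)$ critical top cells.
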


\begin{prop}[{\cite[Corollary 7.2]{ragnarsson tenner}}] \label{contractible}
  A finite simple graph $G$ satisfies $\B(G)=0$ if and only if $G$ has
  an isolated vertex.
\end{prop}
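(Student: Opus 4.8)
The plan is to prove both implications using the edge recursion of Theorem~\ref{key}, together with the observation that $\B(G)$ is always a nonnegative integer, being the number of spheres in a wedge as guaranteed by that theorem. This nonnegativity is the crucial enabling fact: in the recursion $\B(G)=\B(G-e)+\B(G/e)+\B(G-[e])$ all three summands are $\geq 0$, so to force $\B(G)>0$ it suffices to exhibit a single positive summand, while to force $\B(G)=0$ it suffices to show that all three vanish.

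For the direction ``$G$ has an isolated vertex $\Rightarrow \B(G)=0$,'' I would induct on the number of edges of $G$. Let $v$ be an isolated vertex. If $G$ has no edges then $G=\delta_n$ with $n=|G|\geq 1$, and $\B(\delta_n)=0$ by Theorem~\ref{key}. Otherwise, pick any edge $e$; since $e$ is not incident to $v$, the vertex $v$ remains isolated in each of $G-e$, $G/e$, and $G-[e]$, and each of these graphs has strictly fewer edges than $G$. By the inductive hypothesis all three boolean numbers vanish, whence $\B(G)=0$.

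For the converse, I would prove the contrapositive ``$G$ has no isolated vertex $\Rightarrow \B(G)>0$'' by induction on $|G|$, with base cases $G=\emptyset$ and $G=K_2$, both giving $\B=1$. For the inductive step, the governing structural dichotomy is whether $G$ contains an edge with an endpoint of degree at least $2$. If such an edge $e=\{u,w\}$ exists, then simple contraction merges $u$ and $w$ into a vertex that retains a neighbor, and one checks that no other vertex can become isolated under contraction; thus $G/e$ has no isolated vertex and $|G/e|=|G|-1$, so $\B(G/e)>0$ by induction and hence $\B(G)\geq \B(G/e)>0$. If no such edge exists, then every edge is an isolated edge and $G$ is a disjoint union of copies of $K_2$; choosing any edge $e$, the extraction $G-[e]$ is again a disjoint union of copies of $K_2$ (or $\emptyset$), still with no isolated vertex, and $|G-[e]|=|G|-2$, so $\B(G-[e])>0$ by induction and again $\B(G)>0$.

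The main obstacle I anticipate is this converse direction, and specifically the verification that simple contraction cannot create an isolated vertex except in the degenerate ``isolated edge'' situation. The clean resolution is precisely the degree dichotomy above: contraction is safe exactly when some endpoint of $e$ has another neighbor, and the leftover perfect-matching case is handled separately via extraction. The nonnegativity of $\B$ is what makes this ``one good summand suffices'' strategy work, letting me ignore the other two (possibly zero) terms entirely.
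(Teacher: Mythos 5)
Your proof is correct, but there is nothing in the paper to compare it against: the proposition is imported wholesale, cited as Corollary 7.2 of \cite{ragnarsson tenner}, with no argument given in this paper. Your argument is therefore a self-contained derivation from the statement of Theorem~\ref{key} alone, and both halves check out. In the forward direction, an isolated vertex $v$ survives all three operations applied to any edge $e$ (since $e$ is not incident to $v$), and each of $G-e$, $G/e$, $G-[e]$ has strictly fewer edges, so the induction closes with base case $\delta_n$; in particular $G-[e]$ is never $\emptyset$ (where $\B = 1$), because $v$ survives extraction --- a point worth making explicit. In the converse, the key enabling fact $\B(G) \geq 0$ is indeed available from Theorem~\ref{key}, since the notation $b \cdot S^r$ is defined only for $b \geq 0$, so one positive summand in the recursion forces $\B(G) > 0$. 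Your structural dichotomy is exhaustive and sound: if no edge has an endpoint of degree at least $2$, a graph without isolated vertices is a disjoint union of copies of $K_2$, handled by extraction; otherwise, for an edge $e = \{u,w\}$ with $\deg(u) \geq 2$, the contraction $G/e$ has no isolated vertex --- the merged vertex inherits a neighbor of $u$ other than $w$, and any vertex $x \notin \{u,w\}$ keeps at least one incident edge, since simple contraction removes only loops (which occur only at the merged vertex) and duplicate edges (which never exhaust the edges at $x$). Compared with the cited source, where the result sits inside the homotopy-theoretic development of boolean complexes, your route buys logical economy: it uses only the edge recursion and the nonnegativity implicit in the wedge-of-spheres statement, both of which this paper already quotes, so the proposition could in principle have been proved in situ rather than cited.
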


\subsection{Connections between the boolean number and other functions}

The function $\B$ is related to several functions that 
have been studied previously; namely the universal edge elimination polynomial \cite{agm}, the bivariate chromatic polynomial  \cite{dpt}, and the rank generating function.  We highlight these connections primarily for the sake of context, but also 
because they enable the complexity analysis of Section~\ref{sec:complexity}. The reader is refered to \cite{agm} and \cite{dpt} for more information about these functions.

The universal edge elimination polynomial $\xi$ was introduced in \cite{agm}. It is defined on multi-graphs
$G$ and is determined by the recursive definition 
\begin{eqnarray*}
  \xi(G,x,y,z) &= &\xi(G-e,x,y,z) + y\xi(G\downarrow e,x,y,z) + z\xi(G - [e],x,y,z), \\
  \xi(G_1 \sqcup G_2, x,y,z) &= &\xi(G_1, x,y,z) \cdot \xi(G_2, x,y,z), \\
  \xi(\delta_1) &= &x, \text{ and} \\
  \xi(\emptyset) &= &1.
\end{eqnarray*}
The ``universal'' property of $\xi$ is that any polynomial defined on multi-graphs 
that satisfies a linear edge-recurrence relation is an evaluation of $\xi$. To apply 
this property to $\B$, which satisfies a linear edge recurrence but is only defined
on simple graphs, we use the following lemma, which follows immediately from the
recurrence for $\xi$.

\begin{lem}\label{lem: strip edges in xi}
Fix a graph $G$ and an edge $e$.  If more than one edge connects the endpoints of $e$, 
then $\xi(G,x,-1,z) = \xi(G-e,x,-1,z)$. 
\end{lem}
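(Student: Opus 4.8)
The plan is to apply the defining recurrence for $\xi$ twice. Since more than one edge joins the endpoints $u$ and $v$ of $e$, fix a second edge $e'$ parallel to $e$. First I would apply the edge recurrence to $e$ in $G$, which gives
$$\xi(G,x,y,z) - \xi(G-e,x,y,z) = y\,\xi(G\downarrow e,x,y,z) + z\,\xi(G-[e],x,y,z).$$
Thus it suffices to show that the right-hand side vanishes at $y=-1$, that is, that $\xi(G\downarrow e,x,-1,z) = z\,\xi(G-[e],x,-1,z)$.

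To see this, note that contracting $e$ turns the parallel edge $e'$ into a loop $\ell$ at the merged vertex $w$ of $H := G\downarrow e$. Applying the recurrence to $\ell$ in $H$, and using that contracting a loop simply deletes it (so that $H\downarrow\ell = H-\ell$), yields
$$\xi(H,x,y,z) = (1+y)\,\xi(H-\ell,x,y,z) + z\,\xi(H-[\ell],x,y,z),$$
whose specialization at $y=-1$ is exactly $\xi(H,x,-1,z) = z\,\xi(H-[\ell],x,-1,z)$. The final step is to identify $H-[\ell]$ with $G-[e]$: extracting the loop $\ell$ deletes $w$ together with every edge incident to it, and these are precisely the images of the edges of $G$ meeting $u$ or $v$, so the two graphs coincide. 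Combining the two displays gives the desired cancellation.

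I expect the only delicate point to be the bookkeeping around the loop: confirming that, under the multigraph contraction convention used here, contracting $\ell$ deletes it (producing the factor $1+y$ that kills the contraction term at $y=-1$), and checking the graph identity $(G\downarrow e)-[\ell] = G-[e]$. Once these are in hand, the result follows by direct substitution into the recurrence, as the authors indicate.
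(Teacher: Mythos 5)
Your proof is correct and follows the same route the paper intends when it says the lemma ``follows immediately from the recurrence for $\xi$'': you apply the recurrence at $e$, then at the loop $\ell$ that the parallel edge $e'$ becomes in $G\downarrow e$, where the contraction term merges with the deletion term to give the factor $1+y$ vanishing at $y=-1$, and the identification $(G\downarrow e)-[\ell]=G-[e]$ makes the extraction terms cancel. The convention you flag---that $\xi$ is defined on multigraphs with loops, its recurrence applies at a loop, and contracting a loop deletes it---is indeed the one used in \cite{agm}, so your bookkeeping is sound and simply makes explicit what the paper leaves as immediate.
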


When $G$ is simple, the graphs $G-e$ and $G-[e]$ are simple as well.  However, $G\downarrow e$ may have multiple edges (although not
loops). Using Lemma \ref{lem: strip edges in xi} we deduce that, for a simple graph $G$,
 \[ \xi(G,x,-1,z) = \xi(G-e,x,-1,z) - \xi(G/ e,x,-1,z) + z\xi(G - [e],x,-1,z).\]
In particular, we obtain the following characterization of $\B$.

\begin{prop} \label{prop:bool as xi} If $G$ is a simple graph, then $\B(G) = (-1)^{|G|}\xi(G,0,-1,1)$.
\end{prop}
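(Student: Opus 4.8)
The plan is to prove $\B(G) = (-1)^{|G|}\xi(G,0,-1,1)$ by showing that the right-hand side satisfies exactly the same recursion and base cases that characterize $\B$ in Theorem~\ref{key}. Since both quantities are determined by those relations, they must agree on all finite simple graphs. Concretely, I would define $f(G) := (-1)^{|G|}\xi(G,0,-1,1)$ and verify three things: that $f$ satisfies the edge recurrence $f(G) = f(G-e)+f(G/e)+f(G-[e])$, that $f(\delta_n)=0$ for $n\ge 1$, and that $f(\emptyset)=1$.

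The key computational input is the specialized recurrence already derived in the excerpt, namely that for a simple graph $G$ with edge $e$,
\[
\xi(G,x,-1,z) = \xi(G-e,x,-1,z) - \xi(G/e,x,-1,z) + z\,\xi(G-[e],x,-1,z).
\]
First I would substitute $x=0$, $z=1$ to obtain $\xi(G,0,-1,1) = \xi(G-e,0,-1,1) - \xi(G/e,0,-1,1) + \xi(G-[e],0,-1,1)$. The only delicate point in converting this into the recurrence for $f$ is the sign bookkeeping: $G-e$ and $G/e$ have the same number of vertices as $G$, so both pick up the factor $(-1)^{|G|}$, while $G-[e]$ has two fewer vertices, so $(-1)^{|G-[e]|} = (-1)^{|G|-2} = (-1)^{|G|}$ as well. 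Hence multiplying through by $(-1)^{|G|}$ turns the middle $-\xi(G/e)$ into $+f(G/e)$ (the two minus signs cancel) and leaves the other two terms positive, yielding precisely $f(G) = f(G-e) + f(G/e) + f(G-[e])$.

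For the base cases I would use the multiplicativity and the evaluations $\xi(\delta_1)=x$ and $\xi(\emptyset)=1$ from the definition of $\xi$. Since $\delta_n$ is a disjoint union of $n$ copies of $\delta_1$, multiplicativity gives $\xi(\delta_n,x,y,z) = x^n$, so $\xi(\delta_n,0,-1,1) = 0^n = 0$ for $n\ge 1$, whence $f(\delta_n)=0$. For the empty graph, $\xi(\emptyset)=1$ and $|{\emptyset}|=0$, so $f(\emptyset) = (-1)^0\cdot 1 = 1$. With the recurrence and both base cases matching those of Theorem~\ref{key}, induction on the number of edges (the recurrence strictly reduces edge count in the $G-e$ and $G-[e]$ terms, and $G/e$ has fewer vertices) shows $f = \B$.

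I do not expect a serious obstacle here; the proposition is essentially a specialization-plus-sign-tracking argument. The one spot demanding care is confirming that the universal-polynomial specialization at $y=-1$ genuinely reproduces the simple-graph edge recurrence, which is exactly what Lemma~\ref{lem: strip edges in xi} and the displayed identity preceding the proposition already guarantee. So the main content has been front-loaded into that derivation, and the proof of the proposition itself reduces to matching base cases and checking that the parity factor $(-1)^{|G|}$ is compatible across all three terms of the recursion.
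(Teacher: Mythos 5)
Your strategy is the same as the paper's, whose entire proof is the one-line observation that the two invariants satisfy the same recursion with the same initial conditions; your write-up simply makes the verification explicit. However, there is a genuine error in your sign bookkeeping, at precisely the point you flagged as delicate. You assert that $G/e$ has the same number of vertices as $G$. It does not: simple contraction identifies the two endpoints of $e$ into a single vertex, so $|G/e| = |G|-1$. Under your stated premise, multiplying the specialized recurrence by $(-1)^{|G|}$ would give
\[
(-1)^{|G|}\cdot\bigl(-\xi(G/e,0,-1,1)\bigr) \;=\; -\,(-1)^{|G/e|}\xi(G/e,0,-1,1) \;=\; -f(G/e),
\]
so you would obtain $f(G) = f(G-e) - f(G/e) + f(G-[e])$, which does \emph{not} match the recursion for $\B$ in Theorem~\ref{key}. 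Your parenthetical ``the two minus signs cancel'' has no source for a second minus sign if the parities of $G$ and $G/e$ agree; the cancellation you need comes exactly from the parity flip that contraction causes: since $(-1)^{|G/e|} = (-1)^{|G|-1} = -(-1)^{|G|}$, one has $-(-1)^{|G|}\xi(G/e,0,-1,1) = (-1)^{|G/e|}\xi(G/e,0,-1,1) = f(G/e)$, and the recursion for $f$ comes out with all three terms positive, as required.

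With that single correction, the rest of your argument is sound and complete: the base cases follow from multiplicativity of $\xi$ together with $\xi(\delta_1)=x$ and $\xi(\emptyset)=1$, giving $f(\delta_n) = (-1)^n\cdot 0^n = 0$ for $n \geq 1$ and $f(\emptyset)=1$; and the induction on the number of edges terminates because all three graphs $G-e$, $G/e$, $G-[e]$ have strictly fewer edges than $G$ (note that in $G/e$ the contracted edge becomes a loop and is removed, so the edge count does drop). This is exactly the content the paper compresses into its one-sentence proof, so once the vertex count of $G/e$ is fixed, your proposal is a faithful, fully detailed version of the paper's argument.
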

\begin{proof}
The two invariants satisfy the same recursion with the same initial conditions.
\end{proof}

Notice that the graph invariant $\xi(G,0,1,1)$ satisfies the recurrence
$$\xi(G,0,1,1) = \xi(G-e,0,1,1) + \xi(G\downarrow e,0,1,1) + \xi(G - [e],0,1,1).$$
However it is not true that $\B(G) = \xi(G,0,1,1)$, as $\xi(G\downarrow e,0,1,1) \neq \xi(G / e,0,1,1)$
in general.

For a graph $G$ and $x,y \geq 0$, the bivariate chromatic polynomial $P(G,x,y)$ counts the number of 
colorings of $G$ using $y$ proper colors and $x-y$ improper colors. In such
a coloring, two vertices colored by the same proper color may not be connected
by an edge, but one may have the same improper color on both endpoints of 
an edge.  Using a linear edge recurrence, it is shown in \cite{agm} that $P(G,x,y) = \xi(G,x,-1,x-y).$
Proposition \ref{prop:bool as xi} then implies the following result.

\begin{cor} \label{cor:bool as bichrom}
If $G$ is a simple graph, then $\B(G) = (-1)^{|G|}P(G,0,-1)$.
\end{cor}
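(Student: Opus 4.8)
The plan is to derive this directly by combining two facts that are already in hand: the identity $P(G,x,y) = \xi(G,x,-1,x-y)$ established in \cite{agm}, and the characterization of the boolean number in Proposition~\ref{prop:bool as xi}. Since the bivariate chromatic polynomial has already been written as a specialization of the universal edge elimination polynomial $\xi$, no new recursion needs to be set up; the entire content of the corollary reduces to selecting the correct values of $x$ and $y$ so that the two specializations of $\xi$ coincide.

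First I would substitute $x=0$ and $y=-1$ into the identity $P(G,x,y) = \xi(G,x,-1,x-y)$. The third argument becomes $x - y = 0 - (-1) = 1$, so this specialization gives $P(G,0,-1) = \xi(G,0,-1,1)$. Next I would invoke Proposition~\ref{prop:bool as xi}, which asserts that $\B(G) = (-1)^{|G|}\xi(G,0,-1,1)$ for every simple graph $G$. Replacing $\xi(G,0,-1,1)$ by $P(G,0,-1)$ in this expression yields $\B(G) = (-1)^{|G|}P(G,0,-1)$, which is exactly the claimed formula.

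The main obstacle here is negligible: the only point requiring care is the bookkeeping of the slot $x-y$ in the substitution, to confirm that it evaluates to $1$ and thereby matches the third argument of $\xi$ appearing in Proposition~\ref{prop:bool as xi}. All of the genuine work has been carried out beforehand, namely in proving Proposition~\ref{prop:bool as xi} (by matching edge recursions and initial conditions against those of $\xi$) and in the cited identity of \cite{agm} expressing $P$ as a specialization of $\xi$. Consequently the corollary follows from a single evaluation, with no further homotopy-theoretic or recursive input needed.
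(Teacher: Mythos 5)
Your proof is correct and matches the paper's own argument exactly: the paper also deduces the corollary by specializing the identity $P(G,x,y) = \xi(G,x,-1,x-y)$ from \cite{agm} at $(x,y)=(0,-1)$, so that $P(G,0,-1)=\xi(G,0,-1,1)$, and then applying Proposition~\ref{prop:bool as xi}. Your careful check that the third slot $x-y$ evaluates to $1$ is precisely the only substantive step.
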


Also, if $R_P(t)$ is the rank generating function of a ranked poset $P$, then one easily obtains
\[ \B(G) = (-1)^{|G|} \cdot R_{\mathbb{B}(G)}(-1) \]
by comparing each side of the equation to the Euler characteristic
of $\mathbb{B}(G)$.

\subsection{Sequences appearing in this paper}

The {\em Legendre-Stirling numbers} are defined in~\cite{ELW} as
\begin{equation} \label{eq:L-S}
  d(i, j) 
  = \sum_{\ell=1}^{j} 
  \frac{(-1)^{\ell+j}(2\ell+1)(\ell^2+\ell)^i}{(\ell+j+1)!(j-\ell)!}. 
\end{equation}
These are sequence A071951 of \cite{oeis}.

The {\em Genocchi numbers of the second kind} (also known as the {\em
  median Genocchi numbers}) have several definitions (see~\cite[A005439]{oeis} and
references therein). One interpretation is that they count permutations $a_1a_2\cdots a_{2n+1} \in S_{2n+1}$ such that $a_i>i$ if $i$ is odd and $i<n$, and $a_i \leq i$ if $i$ is even.

The well-known {\em Stirling numbers of the second kind} count
partitions of an $n$ element set into $k$ nonempty
blocks. They form sequence A008277 of \cite{oeis}.

\section{Recursion for a general Ferrers shape}\label{general}

As mentioned previously, we make the convention that a Ferrers shape
has a specified number of rows, even if some of these rows are
empty. Such a Ferrers shape corresponds to a partition into a
prescribed number of parts, where some parts are allowed to be zero.

\begin{defn} 
  For a Ferrers shape $\lambda$, the Ferrers graph associated to
  $\lambda$ is denoted $\F{\lambda}$.  When no confusion will arise,
  the notation $\B(\lambda)$ will be taken to mean $\B(\F{\lambda})$.
  If the shape $\lambda$ has $r$ rows and $\lambda_1$ columns, then
  the vertices of $\F{\lambda}$ will be denoted $\{x_1, \ldots, x_r\} \sqcup \{y_1, \ldots, y_{\lambda_1}\}$,
  and there is an edge $\{x_i, y_j\}$ if and only if $\lambda_i \ge j$.
\end{defn}

If $\lambda$ has a row of length zero, that is, if some $\lambda_i$
equals $0$, then the corresponding vertex $x_i$ has no incident
edges. Consequently, the boolean number of such a graph is $0$, by
Proposition \ref{contractible}.

The aim of this section is to obtain a recursive formula for the
boolean number of a Ferrers shape, based on the length of its bottom
row. First we define the shapes appearing in the recursion.

\begin{defn}
  For a Ferrers shape $\lambda = (\lambda_1, \ldots, \lambda_r)$ with
  $r>1$ rows, set $\norow$ to be the shape $(\lambda_1, \ldots,
  \lambda_{r-1})$, obtained by deleting the bottom
  row from $\lambda$.
\end{defn}

\begin{defn}
For a Ferrers shape $\lambda = (\lambda_1, \ldots, \lambda_r)$, and an integer $t \ge - \lambda_r$, define the shape
$$\lambda[t] = (\lambda_1 + t, \lambda_2 + t, \ldots, \lambda_r + t).$$
\end{defn}

The shape $\lambda[t]$ is obtained from $\lambda$ by appending $t$
columns of length $r$ to the left side of the shape $\lambda$. If $t <
0$, then these columns are actually deleted from $\lambda$. When $t =
-\lambda_r$, this means that all the boxes in the bottom row of
$\lambda$ are deleted, so the bottom row of $\lambda[-\lambda_r]$ is
empty. Furthermore, for all $i$ such that $\lambda_i = \lambda_r$, the
$i$-th row of the shape $\lambda[-\lambda_r]$ is empty.

\begin{thm} \label{thm:Recursion}
  The boolean number of the Ferrers graph associated to the
  shape $\lambda = (\lambda_1, \ldots, \lambda_r)$, can be computed
  recursively according to the formula
\begin{equation}\label{eqn:recursive formula}
  \B(\lambda) =
  \begin{cases}
    1 & \text{ if } r = 1;\\
    \lambda_r \cdot \B(\norow) + \sum\limits_{\ell=1}^{\lambda_r} \binom{\lambda_r+1}{\ell+1} \cdot \B(\norow[-\ell]) & \text{ if } r>1.
  \end{cases}
\end{equation}
\end{thm}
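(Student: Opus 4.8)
The plan is to run the edge recursion of Theorem~\ref{key} on the edges meeting the bottom-row vertex $x_r$, using induction on the bottom-row length $\lambda_r$ (for fixed $r>1$). The case $r=1$ is a separate base case: here $\F{\lambda}$ is a star $K_{1,\lambda_1}$, and recursing on any edge gives $\B(K_{1,n})=\B(K_{1,n-1})$, so $\B(\lambda)=1$. When $\lambda_r=0$ the vertex $x_r$ is isolated, hence $\B(\lambda)=0$ by Proposition~\ref{contractible}, in agreement with the empty sum; this is the base case of the induction on $\lambda_r$.

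For the inductive step I would apply Theorem~\ref{key} to the edge $e=\{x_r,y_{\lambda_r}\}$ joining $x_r$ to the rightmost column it meets. The deletion and extraction terms are readily identified as Ferrers graphs. Since $\lambda_{r-1}\ge\lambda_r$, deleting $e$ only shortens the bottom row, so $\F{\lambda}-e=\F{\lambda_1,\dots,\lambda_{r-1},\lambda_r-1}$, which is covered by the inductive hypothesis. The column $y_{\lambda_r}$ has full height $r$, so extracting $e$ removes a full column from $\F{\norow}$; since all full columns are interchangeable, $\F{\lambda}-[e]\cong\F{\norow[-1]}$.

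The crux is the contraction term $\F{\lambda}/e$. Simple contraction merges the row vertex $x_r$ with the full column $y_{\lambda_r}$ into a vertex $v$ adjacent to every row $x_1,\dots,x_{r-1}$ and to the columns $y_1,\dots,y_{\lambda_r-1}$, so $\F{\lambda}/e$ is no longer a Ferrers graph: it is $\F{\norow}$ with one distinguished full column $v$ joined to $\lambda_r-1$ of the remaining full columns. To compute its boolean number I would introduce the auxiliary family of graphs obtained from $\F{\norow[-s]}$ by joining one full column to $k$ other full columns (with $\F{\lambda}/e$ the case $s=0$, $k=\lambda_r-1$), and recurse on the edges joining $v$ to those other columns. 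Here deletion removes one such edge; extraction removes two full columns, landing in $\F{\norow[-s-2]}$; and contraction merges two full columns, reducing to the same configuration over $\F{\norow[-s-1]}$ with one fewer extra edge. (Whenever a shape $\norow[-s]$ acquires an empty row its boolean number is $0$, so such terms vanish and the recursion stays consistent.) Solving this triangular recursion writes $\B(\F{\lambda}/e)$ as an explicit integer combination $\sum_t c(k,t)\,\B(\norow[-t])$ of boolean numbers of truncations of $\norow$, where the coefficients obey a Pascal-type recurrence away from one value of $t$ and evaluate to binomial coefficients.

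Finally I would substitute these three contributions into $\B(\lambda)=\B(\F{\lambda}-e)+\B(\F{\lambda}/e)+\B(\F{\lambda}-[e])$ and match coefficients of each $\B(\norow[-\ell])$ against the target formula. Writing $m=\lambda_r$, the deletion term supplies $\binom{m}{\ell+1}$ by the inductive hypothesis, the contraction term supplies $c(m-1,\ell)=\binom{m}{\ell}$, and Pascal's rule $\binom{m}{\ell+1}+\binom{m}{\ell}=\binom{m+1}{\ell+1}$ combines them for $\ell\ge 2$; the boundary coefficients at $\ell=0,1$ receive small corrections from the extraction term $\B(\norow[-1])$ and from the boundary of the triangular recurrence, yielding the factor $\lambda_r$ on $\B(\norow)$ and $\binom{\lambda_r+1}{2}$ on $\B(\norow[-1])$. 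I expect the only delicate part to be the analysis of the contraction family — correctly recognizing $\F{\lambda}/e$ and solving its triangular recurrence — since the remaining steps are a direct application of the edge recursion together with a routine binomial identity.
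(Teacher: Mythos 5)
Your proposal is correct and follows essentially the same route as the paper: induction on $\lambda_r$ with the edge recursion of Theorem~\ref{key} applied at $\{x_r, y_{\lambda_r}\}$, identifying the deletion and extraction terms as Ferrers graphs, and handling the contraction term by a secondary induction on an auxiliary family of non-bipartite graphs --- your ``full column joined to $k$ other full columns over $\F{\norow[-s]}$'' is exactly the paper's family $F'(\mu)$, and your recursion on the column--column edges (deletion, extraction to $\F{\norow[-s-2]}$, contraction to the same configuration over $\norow[-s-1]$) is the paper's recursion verbatim. The only cosmetic differences are your base case $\lambda_r = 0$ in place of the paper's $\lambda_r = 1$, and the explicit $(s,k)$-parameterization of the contraction family.
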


\begin{proof}
  If $r = 1$, then the graph is a star, and the boolean number of this
  graph is $1$ (see \cite{ragnarsson tenner}).
  The remainder of the theorem is proved by induction on $\lambda_r$.
  
  Assume that $r>1$.  Let the
  vertex $x_r \in \F{\lambda}$ correspond to the last ($r$-th) row of $\lambda$.
  Thus the degree of $x_r$ is $\lambda_r$.  We apply the edge-recursion of
  Theorem~\ref{key} at the edge $\{x_r, y_{\lambda_r}\}$, which
  corresponds to the rightmost box of the bottom row in $\lambda$. The recursion
  involves three operations: deletion, extraction, and simple contraction. The first
  two of these operations translate easily into the language of
  Ferrers graphs. More precisely, deleting $\{x_r,
    y_{\lambda_r}\}$ corresponds to deleting the $\lambda_r$-th box
  from the $r$-th row of $\lambda$, which means subtracting $1$ from
  the last part of the partition $\lambda$.  Note that we still
  require that this shape have $r$ rows, although the bottom row will
  be empty if $\lambda_r = 1$. Likewise, extracting $\{x_r,
    y_{\lambda_r}\}$ corresponds to
  deleting the entire $r$-th row and $\lambda_r$-th column from
  $\lambda$, which gives the shape $\norow[-1]$.

  Thus it remains to understand what happens when $\{x_r, y_{\lambda_r}\}$ is contracted.  Unfortunately, if
  $\lambda_r > 1$, the resulting graph is no longer bipartite, and so
  does not correspond to a Ferrers shape.  However, if $\lambda_r =
  1$, then contracting $\{x_r, y_1\}$ yields the graph
    $\norow$.  In this case, when $\lambda_r = 1$, the graph obtained
    after deleting $\{x_r, y_1\}$ has an isolated vertex,
      which has boolean number $0$.  Thus, if $\lambda_r = 1$, then
      $\B(\lambda)$ equals $\B(\norow) + \B(\norow[-1])$,
      which proves equation~\eqref{eqn:recursive formula} in the base case.

  Now suppose that the result has been proved when the last row of the
  shape has length less than $\lambda_r$.  Then deleting the edge
  $\{x_r, y_{\lambda_r}\}$ contributes
  $$\B\big((\lambda_1, \ldots, \lambda_{r-1}, \lambda_r-1)\big) =
  (\lambda_r-1) \cdot \B(\norow) + 
  \sum\limits_{\ell=2}^{\lambda_r} \binom{\lambda_r}{\ell} \cdot \B(\norow[-(\ell-1)]),$$
  to $\B(\lambda)$, while extracting $\{x_r,
    y_{\lambda_r}\}$ contributes $\B(\norow[-1])$.
  Combining these values gives the sum
  \begin{equation}\label{eqn:partial sum}
    (\lambda_r-1) \cdot \B(\norow) + \left(\binom{\lambda_r}{2} + 1 \right) \cdot
    \B(\norow[-1]) + \sum\limits_{\ell=3}^{\lambda_r} \binom{\lambda_r}{\ell} \cdot
    \B(\norow[-(\ell-1)]).
  \end{equation}

  For a Ferrers shape $\mu$, let $F'(\mu)$ be the (likely
  non-bipartite) graph obtained from $F(\mu)$ by contracting the edge
  corresponding to the rightmost box in the bottom row of $\mu$.  With $F'(\lambda)$ defined in this way, the
  boolean number $\B(\lambda)$ is equal to the sum of $\B(F'(\lambda))$ and
  the expression in \eqref{eqn:partial sum}.  We prove by induction on
  $\lambda_r \ge 2$ that $\B(F'(\lambda))$ equals
  \begin{equation}\label{eqn:contraction}
    \B(\norow) + (\lambda_r-1) \cdot \B(\norow[-1]) + \sum\limits_{\ell=2}^{\lambda_r} \binom{\lambda_r}{\ell} \cdot \B(\norow[-\ell]).
  \end{equation}

  This is straightforward to show
  if $\lambda_r=2$, because the first term corresponds to deleting the
  edge between $y_1$ and $y_2$ in $F'(\lambda)$, the second term
  corresponds to contracting this edge, and the last term represents
  extracting the edge.  Now suppose
  inductively that the equality holds for all shapes whose last rows
  have fewer than $\lambda_r$ boxes.

  Deleting the edge $\{y_1,y_{\lambda_r}\}$ from the graph
  $F'(\lambda)$ yields the graph
  $$F'\big((\lambda_1, \ldots, \lambda_{r-1}, \lambda_r-1)\big).$$
  Likewise, extracting the edge $\{y_1, y_{\lambda_r}\}$ gives the graph $\F{\norow[-2]}$.  Finally,
  simply contracting the edge $\{y_1, y_{\lambda_r}\}$ yields the graph
  $F'(\lambda[-1])$.  Hence
  $$\B\big(F'(\lambda)\big) = \B\big(F'((\lambda_1, \ldots,
  \lambda_{r-1}, \lambda_r-1))\big) + \B(\norow[-2]) +
  \B\big(F'(\lambda[-1])\big),$$ 
  and $\B(F'(\lambda))$ equals the
  expression in~\eqref{eqn:contraction} by the inductive hypothesis
  and a binomial identity.

  Finally, we combine the expressions in~\eqref{eqn:partial sum}
  and~\eqref{eqn:contraction} to complete the proof.
\end{proof}

\section{Formula for general Ferrers shape}\label{formula-general}

In this section we obtain an explicit formula for the boolean number
of a Ferrers graph.

\begin{defn}
For a Ferrers shape $\lambda = (\lambda_1,\ldots,\lambda_r)$ with
$r$ rows, define the numbers $c_\lambda(i,j)$, where $1 \leq i \leq
r$ and $j \in \mathbb{Z}$, recursively by
$$ 
c_\lambda(1,j) =
\begin{cases}
  -1 & \text{~if~} j=0,\\
  1 & \text{~if~} j=1,\\
  0 & \text{~if~} j\not\in \{0,1\},
\end{cases}
$$
and, for $1 < i \leq r$,
$$ c_\lambda(i,j) = j(j-1)^{(\lambda_{i-1}-\lambda_{i})} \cdot
c_\lambda(i-1,j-1) - (j+1)j^{(\lambda_{i-1}-\lambda_{i})}  \cdot c_\lambda(i-1,j).
$$
\end{defn}

Here we use the convention $0^0 =1$. Note that it follows directly
from the definition that $c_\lambda(i,j)$ only takes nonzero values
when $0\leq j \leq i$, and so the values $c_\lambda(i,j)$ can be
calculated by means of a triangular array. The zero values $c_\lambda(i,j)$
for $j <0$ or $j>i$ play no role in the paper, but are included in
the definition so that we avoid exceptions in the recursive
definition.

As an example, the triangle used to calculate $c_{\lambda}(i,j)$ for
the Ferrers shape $\lambda = (7,7,7,6,4,4,2)$ is given in Table
\ref{tbl:Triangle}. This triangle exhibits three interesting
phenomena, each of which can be shown to hold for any Ferrers
shape. First the entries in the leftmost column are zero after the third row.  (In general one has $c_{\lambda}(i,0)=0$ when $\lambda_i < \lambda_1$.) Second, when one disregards the leftmost
column, adjacent entries in the triangle have alternating
signs. Third, the entries in each row sum to zero.

\begin{table}[htbp]
  \begin{center}\small{ 
      $\begin{array}{c|rrrrrrrr}i 
        & j=0 & 1 & 2 & 3 & 4 & 5 & 6 & 7\\
        \hline1 
        &  -1 & 1    \\
        2 &  1  & -3  & 2       \\
        3 &  -1 &  7  & -12   & 6      \\
        4 & 0   & -14 & 86    & -144  &  72 \\
        5 &  0  & 28  & -1060 & 6216  & -10944 & 5760 \\
        6 & 0   & -56 & 3236  &-28044 & 79584  & -89280 & 34560\\
        7 & 0   & 112 & -38944 & 1048416 & -7376304 & 19758720 & -22101120 & 8709120\end{array}$}
  \end{center}
  \caption{The triangle calculating $c_{\lambda}(i,j)$ 
    for $\lambda =(7,7,7,6,4,4,2)$.} \label{tbl:Triangle}
\end{table}

\begin{lem} \label{lem:Samec}
  Let  $\lambda = (\lambda_1,\ldots,\lambda_r)$ be a Ferrers shape with
  $r$ rows.
  \begin{enumerate}
  \item[(a)] $c_{\norow}(i,j) = c_{\lambda}(i,j),$ for all integers $i$ and $j$ with $1 \leq i\leq r-1$.
  \item[(b)] If $t \geq -\lambda_r$, then $c_{\lambda[t]}(i,j) = c_{\lambda}(i,j)$ for all integers $i$ and $j$ with $1 \leq i\leq r$.
  \end{enumerate}
\end{lem}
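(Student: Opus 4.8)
The plan is to observe that the coefficients $c_\lambda(i,j)$ depend on the shape $\lambda$ only through the consecutive differences $\lambda_{k-1}-\lambda_k$ (for $k \le i$), together with the shape-independent base values $c_\lambda(1,j)$. Indeed, for $1 < i \le r$ the recursion for $c_\lambda(i,j)$ involves $\lambda$ solely via the exponent $\lambda_{i-1}-\lambda_i$; it never refers to the individual parts $\lambda_{i-1}$ and $\lambda_i$. Both parts of the lemma then reduce to checking that the relevant differences are unaffected by the operation in question, after which the equality of the coefficients follows by a routine induction on $i$.

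For part (a), the shape $\norow = (\lambda_1,\ldots,\lambda_{r-1})$ agrees with $\lambda$ in its first $r-1$ parts, so $\norow_{k-1}-\norow_k = \lambda_{k-1}-\lambda_k$ for every $k$ with $2 \le k \le r-1$. I would induct on $i$. The base case $i=1$ holds because the base values do not depend on the shape, giving $c_{\norow}(1,j)=c_\lambda(1,j)$. For the inductive step with $2 \le i \le r-1$, the recursions for $c_{\norow}(i,j)$ and $c_\lambda(i,j)$ use the same exponent $\lambda_{i-1}-\lambda_i$ and, by the inductive hypothesis, the same previous-row values $c(i-1,j-1)$ and $c(i-1,j)$; hence they agree.

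For part (b), the key point is that adding $t$ to every part preserves all consecutive differences, since $(\lambda[t])_{k-1}-(\lambda[t])_k = (\lambda_{k-1}+t)-(\lambda_k+t) = \lambda_{k-1}-\lambda_k$. The same induction on $i$, now running over $1 \le i \le r$, goes through verbatim: the base case again coincides, and every recursive step uses an exponent that is invariant under the shift. The hypothesis $t \ge -\lambda_r$ is needed only to ensure that $\lambda[t]$ is a genuine Ferrers shape with nonnegative parts, so that $c_{\lambda[t]}$ is defined; it plays no role in the actual computation.

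I do not expect a real obstacle: the entire content is the single structural observation that the recursion sees only differences of consecutive parts, after which both statements are immediate inductions. The only points requiring care are bookkeeping the index ranges—part (a) is asserted solely for $i \le r-1$, as it must be since $\norow$ has only $r-1$ rows, whereas part (b) holds for all $i \le r$—and confirming that the shape-independent base case $c(1,j)$ starts both inductions correctly.
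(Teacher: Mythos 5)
Your proposal is correct and follows the same route as the paper: the published proof likewise observes that the recursion for $c_\lambda(i,j)$ depends on $\lambda$ only through the consecutive differences $\lambda_{\ell-1}-\lambda_\ell$, which are preserved by passing to $\norow$ (for $\ell \le r-1$) and by forming $\lambda[t]$. Your version merely makes the implicit induction on $i$ explicit, which the paper leaves to the reader.
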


\begin{proof}
From the recursive definition of the numbers $c_\lambda(i,j)$, we see that they depend only on the differences $\lambda_{\ell-1} - \lambda_{\ell}$. Forming $\norow$ does not change these differences for $\ell \leq r-1$, proving part (a). Similarly, forming $\lambda[t]$ from $\lambda$ does not change any differences, proving part (b).
\end{proof}

\begin{thm} \label{thm:PowerSums}
  The boolean number of the Ferrers graph associated to the
  shape $\lambda = (\lambda_1,\ldots,\lambda_r)$ is
  $$\B(\lambda) = \sum_{j=0}^r c_\lambda(r,j) \cdot j^{\lambda_r}.$$
\end{thm}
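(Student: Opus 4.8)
The plan is to prove the formula by induction on $r$, using the recursion of Theorem~\ref{thm:Recursion} together with the recursive definition of the coefficients $c_\lambda(r,j)$, and leaning crucially on Lemma~\ref{lem:Samec} to reconcile the coefficients attached to the various truncated shapes appearing in the recursion. The base case $r=1$ is immediate: by Theorem~\ref{thm:Recursion} we have $\B(\lambda)=1$, while the right-hand side $\sum_{j=0}^1 c_\lambda(1,j)\cdot j^{\lambda_1}$ evaluates to $c_\lambda(1,0)\cdot 0^{\lambda_1}+c_\lambda(1,1)\cdot 1^{\lambda_1}$; since $\lambda_1\ge 1$ we get $0^{\lambda_1}=0$, so only the $j=1$ term survives and contributes $c_\lambda(1,1)=1$, as desired. (The convention $0^0=1$ is harmless here because $r=1$ forces $\lambda_1\ge 1$ for a nontrivial star.)

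For the inductive step, assume the formula holds for all Ferrers shapes with fewer than $r$ rows. The main obstacle is bookkeeping: the recursion in Theorem~\ref{thm:Recursion} expresses $\B(\lambda)$ in terms of $\B(\norow)$ and the various $\B(\norow[-\ell])$ for $1\le\ell\le\lambda_r$, each of which is a shape with $r-1$ rows (here I am using that $\norow$ and its truncations $\norow[-\ell]$ have $r-1$ rows, so the inductive hypothesis applies). Applying the inductive hypothesis to each term, I would write
\begin{equation*}
\B(\norow[-\ell]) = \sum_{k} c_{\norow[-\ell]}(r-1,k)\cdot k^{\lambda_{r-1}-\ell},
\end{equation*}
and similarly for $\B(\norow)$ with exponent $\lambda_{r-1}$. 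The key simplification is Lemma~\ref{lem:Samec}: part~(a) gives $c_{\norow}(r-1,k)=c_\lambda(r-1,k)$, and part~(b) gives $c_{\norow[-\ell]}(r-1,k)=c_{\norow}(r-1,k)=c_\lambda(r-1,k)$, so every coefficient appearing is in fact $c_\lambda(r-1,k)$, independent of the truncation amount $\ell$. This lets me factor the coefficients out of the sum over $\ell$ and reduce the whole expression to a single double sum over $k$ and $\ell$ with coefficients $c_\lambda(r-1,k)$.

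The heart of the argument is then to interchange the order of summation, isolate the coefficient of each power $k^{\lambda_{r-1}}$ or, after accounting for the shifts, to collect terms and match them against the defining recursion
\begin{equation*}
c_\lambda(r,j) = j(j-1)^{\lambda_{r-1}-\lambda_r}c_\lambda(r-1,j-1) - (j+1)j^{\lambda_{r-1}-\lambda_r}c_\lambda(r-1,j).
\end{equation*}
Concretely, I expect the inner sum over $\ell$ of the binomial-weighted powers to collapse via a standard binomial identity of the form $\sum_{\ell}\binom{\lambda_r+1}{\ell+1}k^{-\ell}$ (together with the leading $\lambda_r\cdot\B(\norow)$ term) into expressions of the shape $k(k-1)^{\lambda_{r-1}-\lambda_r}$ and $(k+1)k^{\lambda_{r-1}-\lambda_r}$, evaluated at $k^{\lambda_r}$; reindexing $j=k$ and $j=k+1$ in the two resulting pieces should reproduce exactly the two terms of the $c_\lambda(r,j)$ recursion. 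The main obstacle, and the step requiring genuine care, is this binomial manipulation: verifying that the coefficients of $j^{\lambda_r}$ on both sides agree forces one to recognize the truncated binomial sums as the precise combinations $j(j-1)^{\lambda_{r-1}-\lambda_r}$ and $(j+1)j^{\lambda_{r-1}-\lambda_r}$, and to handle the boundary contributions (the $j=0$ and top $j=r$ terms, and the edge cases where the convention $0^0=1$ intervenes) without error. Once that identity is in hand, the two sides agree termwise in $c_\lambda(r,j)\cdot j^{\lambda_r}$, completing the induction.
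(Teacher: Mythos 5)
Your proposal is correct and follows essentially the same route as the paper's proof: induction on $r$ via Theorem~\ref{thm:Recursion}, using Lemma~\ref{lem:Samec} to identify every coefficient with $c_\lambda(r-1,j)$, then interchanging the order of summation, collapsing the binomial sum, and reindexing to match the defining recursion for $c_\lambda(r,j)$. The single step you leave as ``expected'' is precisely the identity $\sum_{\ell=0}^{\lambda_r}\binom{\lambda_r+1}{\ell+1}j^{\lambda_r-\ell}=(j+1)^{\lambda_r+1}-j^{\lambda_r+1}$ (with the $\lambda_r\cdot\B(\norow)$ term absorbed into the $\ell=0$ summand at the cost of subtracting $j^{\lambda_{r-1}}$), which yields $j^{\lambda_{r-1}-\lambda_r}(j+1)\bigl((j+1)^{\lambda_r}-j^{\lambda_r}\bigr)$ and goes through exactly as you predict, the boundary cases being handled by $c(r-1,-1)=c(r-1,r)=0$ just as you note.
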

\begin{proof}
  We prove this by induction on $r$ using the recursive formula in Theorem \ref{thm:Recursion}. The base case, where $r=1$, is easily checked.
  
  To save notation we write $c(i,j)$ instead of $c_\lambda(i,j)$. This
  should not cause any confusion since, for $1 \leq i \leq r-1$ and
  $\ell \leq \lambda_r$, Lemma~\ref{lem:Samec} yields 
 $c_{\lambda}(i,j) = c_{\norow}(i,j) = c_{\norow[-\ell]}(i,j).$
  
  Assuming that the result is true for shapes with at most $r-1$ rows, we have the following sequence of equalities, where the first equality follows from equation~\eqref{eqn:recursive formula}:
  
  \begin{align}
    \B(\lambda) & =  \lambda_r \cdot \B(\norow) + \sum\limits_{\ell=1}^{\lambda_r} \binom{\lambda_r+1}{\ell+1} \cdot \B(\norow[-\ell])\nonumber\\
    & \ = \lambda_r \sum_{j=0}^{r-1} c(r-1,j)\cdot j^{\lambda_{r-1}} + \sum\limits_{\ell=1}^{\lambda_r} \left( \binom{\lambda_r+1}{\ell+1} \cdot \sum_{j=0}^{r-1} c(r-1,j)\cdot j^{\lambda_{r-1}-\ell} \right) \nonumber \\
    & \ = \sum_{j=0}^{r-1}  c(r-1,j) \left( \left( \sum_{\ell=0}^{\lambda_r} \binom{\lambda_r+1}{\ell+1} j^{\lambda_{r-1}-\ell} \right) - j^{\lambda_{r-1}} \right) \nonumber \\
    & \ = \sum_{j=0}^{r-1} c(r-1,j) \cdot \left( \left( j^{\lambda_{r-1}-\lambda_r} \left((j+1)^{\lambda_r+1} -j^{\lambda_r+1} \right)    \right) - j^{\lambda_{r-1}} \right)\nonumber \\
    & \ = \sum_{j=0}^{r-1} c(r-1,j) \cdot \left( j^{\lambda_{r-1}-\lambda_r}(j+1) \left(  \left(j+1\right)^{\lambda_r}-j^{\lambda_r} \right)   \right).\label{eqn:general formula}
  \end{align}
  If we recall that $c(r-1,-1)=c(r-1,r)=0$, then rewriting the sum and
  collecting terms yields
  \[ \sum_{j=0}^{r} \left( (j-1)^{\lambda_{r-1}-\lambda_r}j \cdot c(r-1,j-1) - j^{\lambda_{r-1}-\lambda_r}(j+1) \cdot c(r-1,j) \right) \cdot j^{\lambda_r}  \]
  on the right hand side of \eqref{eqn:general formula}, and as the coefficient of $j^{\lambda_r}$ equals $c(r,j)$, this completes the proof.
\end{proof}

\section{Staircase shapes}\label{staircases}

\begin{defn}
For $r \geq 1$, a \emph{staircase shape of height $r$} is the Ferrers shape $\St_r = (r,r-1,\ldots,2,1)$. 
\end{defn}

For a staircase shape $\St_r$, the recursive formula
for $c_{\St_r}(i,j)$ simplifies to
\[ c_{\St_r}(i,j) = j(j-1) \cdot c_{\St_r}(i-1,j-1) - (j+1)j \cdot c_{\St_r}(i-1,j). \]
Note, in particular, that $c_{\St_r}(i,0) = 0$ for $i>1$.

\begin{table}[htbp] \begin{center}$\begin{array}{c|rrrrrrrr}i & j=0 & 1 & 2 & 3 & 4 & 5 & 6 & 7\\\hline1 & -1 & 1 \\2 & 0 & -2 & 2 \\3 & 0 & 4 & -16 & 12 \\4 & 0 & -8 & 104 & -240 & 144 \\5 & 0 & 16 & -640 & 3504 & -5760 & 2880 \\6 & 0 & -32 & 3872 & -45888 & 157248 & -201600 & 86400 \\7 & 0 & 64 & -23296 & 573888 & -3695616 & 9192960 & -9676800 & 3628800 \\\end{array}$\end{center}\caption{The triangle calculating $c_{\St(7)}(i,j)$.}\end{table}

\begin{cor}\label{cor:L-S}
For $r  \ge 1$, the values
\begin{equation}\label{eqn:L-S}
\left\{\frac{(-1)^{r+j}}{j!(j-1)!} \cdot c_{\St_r}(r,j)\right\}
\end{equation}
are the Legendre-Stirling numbers.
\end{cor}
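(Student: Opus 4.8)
The plan is to show that the normalized quantities $e(r,j) := \frac{(-1)^{r+j}}{j!\,(j-1)!}\,c_{\St_r}(r,j)$ and the Legendre-Stirling numbers $d(r,j)$ of \eqref{eq:L-S} satisfy the same triangular recursion and the same boundary data, and then conclude equality by induction on $r$. First I would observe that in a staircase all consecutive row-differences equal $1$, so by Lemma~\ref{lem:Samec} the array $c_{\St_r}(i,j)$ does not depend on the height $r$; writing $c(i,j)$ for this common value lets me apply the simplified recursion $c(r,j)=j(j-1)\,c(r-1,j-1)-j(j+1)\,c(r-1,j)$ while freely identifying the height-$r$ entries with those of the shorter staircases, which is exactly what makes the induction on $r$ connect the corollary for height $r$ to that for height $r-1$.

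Next I would substitute $c(r,j)=(-1)^{r+j}j!\,(j-1)!\,e(r,j)$ into that recursion and simplify. The factorial collapses $j\cdot(j-1)!=j!$ and $(j-1)\cdot(j-2)!=(j-1)!$, together with the sign bookkeeping $(-1)^{(r-1)+(j-1)}=(-1)^{r+j}$ and $(-1)^{(r-1)+j}=-(-1)^{r+j}$, turn the minus sign in the $c$-recursion into a plus and yield
\[
e(r,j)=e(r-1,j-1)+j(j+1)\,e(r-1,j).
\]
This is precisely the classical triangular recursion for the Legendre-Stirling numbers.

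The main work is to confirm that the explicit double sum \eqref{eq:L-S} actually obeys this recursion; rather than merely citing \cite{ELW}, I would verify it directly, since \eqref{eq:L-S} is the definition in force here. Writing $a_\ell=\ell(\ell+1)$, each of $d(r,j)$, $d(r-1,j-1)$, and $a_j\,d(r-1,j)$ is a linear combination of the powers $a_\ell^{\,r-1}$, so it suffices to match the coefficient of $a_\ell^{\,r-1}$ for each $\ell$. The $\ell=j$ contributions agree at once, and for $1\le\ell\le j-1$ the required identity reduces, after dividing out the common factor $(-1)^{\ell+j}(2\ell+1)$, to
\[
\frac{a_\ell-a_j}{(\ell+j+1)!\,(j-\ell)!}=\frac{-1}{(\ell+j)!\,(j-1-\ell)!}.
\]
The key algebraic fact is the factorization $a_\ell-a_j=\ell(\ell+1)-j(j+1)=(\ell-j)(\ell+j+1)$; cancelling one factor of $\ell+j+1$ against $(\ell+j+1)!$ and using $(j-\ell)!=(j-\ell)(j-1-\ell)!$ makes the two sides coincide. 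I expect this coefficient comparison to be the most delicate step, although it is in the end a short computation.

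Finally I would check the base case $r=1$. From $c(1,0)=-1$, $c(1,1)=1$, and $c(1,j)=0$ otherwise, one gets $e(1,1)=1$ and $e(1,j)=0$ for $j\neq 1$, where $j=0$ is handled by the convention $1/(-1)!=0$ matching the empty-sum value $d(r,0)=0$; this agrees with $d(1,1)=1$ and $d(1,j)=0$ for $j>1$. With both sequences satisfying the same recursion and the same initial conditions, induction on $r$ gives $e(r,j)=d(r,j)$ for all $r\ge 1$ and all $j$, which is the assertion of the corollary.
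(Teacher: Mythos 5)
Your proposal is correct and follows the same skeleton as the paper's own (one-line) proof: the paper simply observes that the sequence in \eqref{eqn:L-S} has the same initial values and the same recurrence as the Legendre-Stirling numbers, citing \cite[A071951]{oeis} for the latter. Where you genuinely go beyond the paper is in verifying, directly from the defining double sum \eqref{eq:L-S}, that $d(r,j)=d(r-1,j-1)+j(j+1)\,d(r-1,j)$; the paper outsources this to the OEIS entry. Your coefficient comparison is sound: matching the coefficient of $a_\ell^{\,r-1}$ termwise in $\ell$ is indeed sufficient, the $\ell=j$ terms agree trivially, and for $1\le\ell\le j-1$ the identity reduces via the factorization $\ell(\ell+1)-j(j+1)=(\ell-j)(\ell+j+1)$ exactly as you claim. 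The sign and factorial bookkeeping in passing from $c_{\St_r}$ to $e(r,j)$ is also right, including the degenerate $j=1$ case where the factor $j(j-1)=0$ kills the problematic term; and your use of Lemma~\ref{lem:Samec} to make the triangle independent of the height is legitimate (note you implicitly need both parts, since $\underline{\St_r}=\St_{r-1}[1]$, though one can also just say the recursion depends only on the differences $\lambda_{i-1}-\lambda_i=1$). The one loose end is that you assert, but do not derive from \eqref{eq:L-S}, that $d(1,j)=0$ for $j>1$. This is actually needed to close your induction: for example $e(r,r)=d(r,r)$ requires $d(r-1,r)=0$, and your recursion pushes all such above-diagonal vanishing down to precisely the claim $d(1,j)=0$ for $j\ge 2$. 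It is true and quick to patch --- e.g.\ write $2\ell+1=(\ell+j+1)-(j-\ell)$ and telescope, or invoke the standard fact that the Legendre-Stirling triangle vanishes above the diagonal --- so this is a small omission rather than a flaw in the approach, and your proof is in the end more self-contained than the paper's.
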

\begin{proof}
The sequence in \eqref{eqn:L-S} has the same initial values and the same recurrence as the
Legendre-Stirling numbers, given in \cite[A071951]{oeis}.
\end{proof}

The Legendre-Stirling numbers are discussed in \cite{ELW} and \cite{LW}.  From the formula for the Legendre-Stirling numbers (equation \eqref{eq:L-S}), one obtains a formula for $c_{\St_r}(r,j)$, 
and thus for $\B(\St_r)$ as well.

\begin{cor}\label{cor:staircase}
For $r \ge 1$,
$$\B(\St_r) = \sum_{j=1}^r \sum_{\ell=1}^j (-1)^{r+\ell} \frac{(2\ell+1)(\ell^2+\ell)^r \cdot j!j!}{(\ell+j+1)! (j-\ell)!}.$$
\end{cor}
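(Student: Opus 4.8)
The plan is to combine two ingredients that are already available in the excerpt: the power-sum formula of Theorem~\ref{thm:PowerSums} and the Legendre-Stirling identification of Corollary~\ref{cor:L-S}. Specializing Theorem~\ref{thm:PowerSums} to the staircase shape $\St_r = (r,r-1,\ldots,1)$, for which $\lambda_r = 1$, gives
\[ \B(\St_r) = \sum_{j=0}^r c_{\St_r}(r,j) \cdot j^{1} = \sum_{j=1}^r j \cdot c_{\St_r}(r,j), \]
where the $j=0$ term vanishes (both because of the explicit factor $j$ and because $c_{\St_r}(r,0)=0$ for $r>1$). So the entire problem reduces to substituting a closed form for $c_{\St_r}(r,j)$ into this single sum.

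First I would use Corollary~\ref{cor:L-S} to solve for $c_{\St_r}(r,j)$ in terms of the Legendre-Stirling number $d(r,j)$. That corollary states that $(-1)^{r+j} c_{\St_r}(r,j) / \big(j!\,(j-1)!\big) = d(r,j)$, so inverting gives
\[ c_{\St_r}(r,j) = (-1)^{r+j}\, j!\,(j-1)!\cdot d(r,j). \]
Substituting this into the power-sum expression above, and absorbing the leading factor $j$ into the factorials via $j\cdot(j-1)! = j!$, yields
\[ \B(\St_r) = \sum_{j=1}^r (-1)^{r+j}\, (j!)^2\, d(r,j). \]
This intermediate identity is exactly the Legendre-Stirling/Genocchi relation advertised in the introduction, so it is worth recording as a stepping stone.

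Second I would expand $d(r,j)$ using its defining formula, equation~\eqref{eq:L-S}, namely
\[ d(r,j) = \sum_{\ell=1}^{j} \frac{(-1)^{\ell+j}(2\ell+1)(\ell^2+\ell)^r}{(\ell+j+1)!\,(j-\ell)!}, \]
where I have set $i=r$ since the staircase has $r$ rows. Plugging this double expression into the single sum and writing $(j!)^2 = j!\,j!$ produces the nested sum
\[ \B(\St_r) = \sum_{j=1}^r (-1)^{r+j}(j!)^2 \sum_{\ell=1}^j \frac{(-1)^{\ell+j}(2\ell+1)(\ell^2+\ell)^r}{(\ell+j+1)!\,(j-\ell)!}. \]
The only remaining manipulation is to combine the sign factors, and here lies the one point that needs care: the outer sign is $(-1)^{r+j}$ and the inner sign is $(-1)^{\ell+j}$, so their product is $(-1)^{r+2j+\ell} = (-1)^{r+\ell}$ since $(-1)^{2j}=1$. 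The $j$-dependence in the sign cancels completely, leaving precisely the $(-1)^{r+\ell}$ claimed in the statement.

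The argument is essentially a chain of substitutions, so the main obstacle is bookkeeping rather than any genuine difficulty: one must verify that the $j=0$ term is correctly discarded (so that the sum genuinely starts at $j=1$), that the interchange of the leading $j$ with the factorials to form $(j!)^2$ is done correctly, and above all that the two sign factors collapse to $(-1)^{r+\ell}$ as just described. After those checks, rearranging the numerator as $(2\ell+1)(\ell^2+\ell)^r \cdot j!\,j!$ over the denominator $(\ell+j+1)!\,(j-\ell)!$ matches the stated double sum term by term, completing the proof.
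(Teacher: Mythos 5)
Your proposal is correct and follows exactly the paper's (very terse) argument: the paper likewise obtains Corollary~\ref{cor:staircase} by inverting Corollary~\ref{cor:L-S} to get $c_{\St_r}(r,j) = (-1)^{r+j}\,j!\,(j-1)!\,d(r,j)$, substituting this together with equation~\eqref{eq:L-S} into Theorem~\ref{thm:PowerSums} with $\lambda_r=1$. Your sign bookkeeping $(-1)^{r+j}(-1)^{\ell+j} = (-1)^{r+\ell}$ and the absorption $j\cdot(j-1)! = j!$ are both right, so the write-up is a correct, more detailed version of the paper's one-line proof.
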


In fact, the boolean numbers of staircase shapes form a known sequence.

\begin{cor}\label{cor:Genocchi}
  The values $\left\{\B(\St_r)\right\}_{r \ge 1}$ are the
  sequence of Genocchi numbers of the second kind.
\end{cor}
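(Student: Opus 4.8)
The plan is to show that the integer sequence $\{\B(\St_r)\}_{r\ge 1}$ satisfies the same recurrence and initial data that characterize the Genocchi numbers of the second kind, in the spirit of the proof of Corollary~\ref{cor:L-S}. First I would record the reduction coming from the fact that the bottom row of $\St_r$ has length $1$: Theorem~\ref{thm:PowerSums} gives $\B(\St_r) = \sum_{j=0}^r j\cdot c_{\St_r}(r,j)$, and Corollary~\ref{cor:L-S} together with the identity $j\cdot(j-1)! = j!$ turns this into
\[ \B(\St_r) = \sum_{j=1}^r (-1)^{r+j}(j!)^2\, d(r,j). \]
This is already the Legendre--Stirling identity advertised in the abstract, \emph{provided} one knows that the left-hand side is $g(r)$; so the entire corollary rests on identifying $\{\B(\St_r)\}$ with the sequence A005439.

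To make the identification I would pass to the row generating functions $C_r(t) = \sum_j c_{\St_r}(r,j)\,t^j$ and convert the defining recurrence for $c_{\St_r}$ into an operator recurrence. Writing $\theta = t\,\tfrac{d}{dt}$, the coefficient $j(j+1)$ is realized by $\theta(\theta+1)$ acting on $t^j$; after re-indexing the first sum in the recurrence $c(i,j) = j(j-1)c(i-1,j-1) - (j+1)j\,c(i-1,j)$, both surviving factors become $\theta(\theta+1)$ applied to $C_{i-1}$, and one finds
\[ C_r(t) = (t-1)\,\theta(\theta+1)\,C_{r-1}(t), \qquad C_1(t) = t-1, \]
together with $\B(\St_r) = (\theta C_r)(1) = C_r'(1)$. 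One checks directly that this reproduces the staircase triangle, for instance $C_2 = 2t^2-2t$ and $C_3 = 12t^3-16t^2+4t$, giving $\B(\St_r) = 1,2,8,56,608,\dots$\,. The appearance of the eigenvalue factors $\ell(\ell+1)$ --- exactly the quantities $\ell^2+\ell$ in the Legendre--Stirling formula \eqref{eq:L-S} --- signals that this is a \emph{Gandhi/Dumont-type} generating scheme, and such schemes are classically known to produce the Genocchi numbers of the second kind.

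The crux, and the step I expect to be the main obstacle, is matching this operator recursion to a documented characterization of A005439, since the Genocchi numbers of the second kind are not given by a simple linear recurrence. I would close the gap in one of two ways. The cleaner route is to show that $C_r'(1)$ and the standard Gandhi polynomial for the median Genocchi numbers obey the same recursion, after the change of variable $t = e^u$ that turns $\theta(\theta+1)$ into $D(D+1)$ with $D = \tfrac{d}{du}$ and $(t-1)$ into $e^u-1$, thereby reducing the claim to a known polynomial identity in the literature. Alternatively, starting from the explicit double sum of Corollary~\ref{cor:staircase}, one can interchange the order of summation to write $\B(\St_r) = \sum_{\ell=1}^r (-1)^{r+\ell}(2\ell+1)(\ell^2+\ell)^r\, A_\ell(r)$ with $A_\ell(r) = \sum_{j=\ell}^r (j!)^2/[(\ell+j+1)!(j-\ell)!]$, evaluate $A_\ell(r)$ in closed form, and compare term-by-term with a catalogued formula for the Genocchi numbers of the second kind. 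In either case, combined with the verified initial values above, this pins the sequence down as A005439 and simultaneously establishes the Legendre--Stirling relation $g(r) = \sum_{j=1}^r (-1)^{r+j}(j!)^2\, d(r,j)$.
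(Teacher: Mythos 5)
Your reductions are all correct as far as they go: $\B(\St_r)=\sum_j j\cdot c_{\St_r}(r,j)$ does follow from Theorem~\ref{thm:PowerSums} since the bottom row of $\St_r$ has length $1$; your operator recursion $C_r(t)=(t-1)\theta(\theta+1)C_{r-1}(t)$, $C_1(t)=t-1$, is a faithful repackaging of the staircase recurrence $c(i,j)=j(j-1)\,c(i-1,j-1)-(j+1)j\,c(i-1,j)$; and the values $1,2,8,56,608,\dots$ check out. But you yourself name the crux --- identifying the resulting sequence with the Genocchi numbers of the second kind --- and then you do not carry it out. Route 1 appeals to ``the standard Gandhi polynomial for the median Genocchi numbers'' and ``a known polynomial identity in the literature'' without stating either; route 2 requires a closed form for $A_\ell(r)=\sum_{j=\ell}^r (j!)^2/[(\ell+j+1)!\,(j-\ell)!]$ (the summation limit depends on $r$, and no evident closed form exists) together with an unspecified ``catalogued'' single-sum formula for $g(r)$. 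Verifying finitely many initial values cannot pin down a sequence, so matching the recursions is the \emph{entire} mathematical content of this corollary, and as written your proof has a genuine gap precisely at the step you flag as the obstacle. ``Such schemes are classically known to produce the Genocchi numbers of the second kind'' is a heuristic, not a proof.

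The paper closes exactly this gap, and your route 1, made precise, essentially becomes the paper's argument (the $t=e^u$ substitution is unnecessary). The paper takes as its characterization $g(r)=G(r,1)$, where $G(1,x)=1$ and $G(r,x)=(x+1)^2\,G(r-1,x+1)-x(x+1)\,G(r-1,x)$, and proves by induction on $i$, for $1\le i\le r$, the bridging identity
\[ G(r,1)=\sum_{j=1}^i j\cdot c_{\St_r}(i,j)\cdot G(r+1-i,j). \]
The inductive step is the observation that expanding $G(r+1-i,j)=(j+1)^2G(r-i,j+1)-j(j+1)G(r-i,j)$ and collecting terms reproduces the $c_{\St_r}$-recursion; in your language, the $G$-recursion is the adjoint of the operator $(t-1)\theta(\theta+1)$ under the pairing $\sum_j j\,c(i,j)\,G(r+1-i,j)$. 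Setting $i=r$ and using $G(1,j)=1$ gives $g(r)=\sum_j j\,c_{\St_r}(r,j)=\B(\St_r)$. To repair your proposal you therefore need only (i) state this recursive (Gandhi-type) characterization of $g(r)$ precisely, as the paper does, and (ii) prove the induction above; both steps are short, but neither is present in what you wrote.
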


\begin{proof}
  The Genocchi numbers of the second kind $\{g(r)\}$ can be calculated by $g(r) =
  G(r,1)$, where $G(r,x)$ is the function defined recursively by
  \begin{align*}
    &G(r,x) = (x+1)^2 G(r-1, x+1)-x(x+1)G(r-1, x)\text{\ \ \ and}\\
    &G(1,x) = 1 
  \end{align*}
  for all $x\geq 0$ and $r\geq 2$.

By a simple induction on $i$, for $1 \le i \le r$, one can prove that $G(r,1)$ equals
 $$\sum_{j=1}^i j \cdot c_{\St_r}(i,j) \cdot G( r+1 -i , j ).$$
 The base case $i=1$ is trivial, and the inductive step
  follows easily from the recursive formulas for $G(r,x)$ and $c_{\St_r}(i,j)$. When $i=r$, we have $G(r+1-i,j) = 1$, and the equation simplifies to
$$g(r) = G(r,1) = \sum_{j=1}^i j \cdot c_{\St_r}(r,j) = \B(\St_r).$$
\end{proof}

Corollaries~\ref{cor:L-S} and~\ref{cor:Genocchi} reveal a relationship
between the Legendre-Stirling numbers $\{d(r,j)\}$ and the Genocchi
numbers of the second kind $\{g(r)\}$ that seems to be new in the
literature.

\begin{cor}
  The Genocchi numbers of the second kind $\{g(r)\}$ and the
  Legendre-Stirling numbers $\{d(r,j)\}$ are related by the equation
  \[ g(r) = \sum\limits_{j=1}^r (-1)^{r+j}(j!)^2 \cdot d(r,j). \]
\end{cor}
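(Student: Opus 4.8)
The plan is to combine the two preceding corollaries directly, with no new machinery. Corollary~\ref{cor:L-S} rearranges to express $c_{\St_r}(r,j)$ in terms of the Legendre-Stirling numbers, namely
\[ c_{\St_r}(r,j) = (-1)^{r+j}\,j!\,(j-1)!\cdot d(r,j), \]
so once I have $g(r)$ written as an explicit linear combination of the $c_{\St_r}(r,j)$, the result follows by substitution. The whole argument is therefore a bookkeeping exercise in assembling facts already proved.

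First I would record the relevant linear combination. For a staircase shape the bottom row has length $\lambda_r = 1$, so Theorem~\ref{thm:PowerSums} specializes to
\[ \B(\St_r) = \sum_{j=0}^r c_{\St_r}(r,j)\cdot j = \sum_{j=1}^r j\cdot c_{\St_r}(r,j), \]
the $j=0$ term dropping out. By Corollary~\ref{cor:Genocchi} the left-hand side is $g(r)$; in fact the identity $g(r) = \sum_{j=1}^r j\cdot c_{\St_r}(r,j)$ appears verbatim as the final display in the proof of that corollary, so it may simply be cited rather than rederived.

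Substituting the expression for $c_{\St_r}(r,j)$ from Corollary~\ref{cor:L-S} into this sum gives
\[ g(r) = \sum_{j=1}^r j\cdot(-1)^{r+j}\,j!\,(j-1)!\cdot d(r,j), \]
and the factorial collapse $j\cdot j!\cdot (j-1)! = j!\cdot\bigl(j\cdot(j-1)!\bigr) = (j!)^2$ produces the claimed formula. I do not expect any genuine obstacle: the mathematical content lives entirely in Corollaries~\ref{cor:L-S} and~\ref{cor:Genocchi}, and this statement is a one-line algebraic consequence of the two. The only points demanding even minor care are checking that the single factor $j$ contributes no sign, so that $(-1)^{r+j}$ passes through unchanged, and confirming the factorial simplification—both routine.
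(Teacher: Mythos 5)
Your proposal is correct and matches the paper's intent exactly: the paper gives no explicit proof, presenting the corollary as an immediate consequence of Corollaries~\ref{cor:L-S} and~\ref{cor:Genocchi}, which is precisely the combination you carry out. Your substitution of $c_{\St_r}(r,j) = (-1)^{r+j}\,j!\,(j-1)!\cdot d(r,j)$ into $g(r) = \sum_{j=1}^r j\cdot c_{\St_r}(r,j)$ and the simplification $j\cdot j!\,(j-1)! = (j!)^2$ are all valid.
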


Corollary \ref{cor:L-S} says in particular that, for a fixed $j \geq 1$, the sequence
\[ \left\{ \frac{(-1)^{i+j}}{j!(j-1)!} \cdot c_{\sigma_r}(i,j) \right\}_{i\geq 1} \]
has the same generating function as the Legendre-Stirling numbers, namely
\[ \frac{x^j}{\prod\limits_{\ell=1}^{j}\left(1-\ell(\ell+1)x\right)}. \]
This result can be generalized to staircases of other steplengths as follows.

\begin{defn}
For $r, d \geq 1$, a \emph{staircase shape of height $r$ with steplength $d$} is the Ferrers shape $\St_{r,d} = (rd,(r-1)d,\ldots,2d,d)$. 
\end{defn}

\begin{prop}
 Let $\St_{r,d}$ be a staircase shape of height $r$ and steplength $d$, and put 
  \[ \widehat{c}_{\St_{r,d}}(i,j) = \frac{(-1)^{i+j} c_{\St_{r,d}}(i,j)}{j!\left((j-1)!\right)^d}. \]
  For $j \geq 1$, the sequence $\{\widehat{c}_{\St_{r,d}}(i,j) \}_{i\geq 1}$ has generating function
  \[ F_j(x)=\frac{x^j}{\prod\limits_{i=1}^{j}\left(1-i^d(i+1)x\right)}. \]
\end{prop}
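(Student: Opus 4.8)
The plan is to reduce the statement to a single two-term recurrence for $\widehat{c}_{\St_{r,d}}(i,j)$ and then solve the resulting family of generating functions by induction on $j$. First I would observe that for $\St_{r,d} = (rd, (r-1)d, \ldots, d)$ every consecutive difference $\lambda_{i-1} - \lambda_i$ equals $d$, so Lemma~\ref{lem:Samec} shows that $c_{\St_{r,d}}(i,j)$ does not depend on $r$; abbreviating it as $c(i,j)$, the defining recursion for the coefficients specializes to
\[ c(i,j) = j(j-1)^{d} \cdot c(i-1,j-1) - (j+1)j^{d} \cdot c(i-1,j), \]
with $c(1,1) = 1$, $c(1,j) = 0$ for $j \geq 2$, and (using $0^{d} = 0$) $c(i,0) = 0$ for $i > 1$.

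Next I would substitute $c(i,j) = (-1)^{i+j} j! \, ((j-1)!)^{d} \, \widehat{c}(i,j)$ into this recursion and check that the prefactors telescope. The identities $j \cdot (j-1)! = j!$ and $(j-1)^{d} ((j-2)!)^{d} = ((j-1)!)^{d}$ take care of the first term, while the sign $(-1)^{i+j-1}$ against the leading minus contributes $+(-1)^{i+j}$ to the second term. After cancelling the common factor $(-1)^{i+j} j! ((j-1)!)^{d}$ this leaves the clean recurrence
\[ \widehat{c}(i,j) = \widehat{c}(i-1,j-1) + j^{d}(j+1) \cdot \widehat{c}(i-1,j), \]
valid for $i \geq 2$ and $j \geq 2$, with initial data $\widehat{c}(1,1) = 1$ and $\widehat{c}(1,j) = 0$ for $j \geq 2$.

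Then, writing $F_j(x) = \sum_{i \geq 1} \widehat{c}(i,j) x^{i}$, I would multiply the recurrence by $x^{i}$, sum over $i \geq 2$, and reindex; since $\widehat{c}(1,j) = 0$ for $j \geq 2$, no constant term survives and one obtains the functional equation $(1 - j^{d}(j+1)x) F_j(x) = x F_{j-1}(x)$ for $j \geq 2$. The base case $j = 1$ I would handle directly rather than through this recurrence: because the factor $0^{d}$ annihilates the $c(i-1,0)$ term, one has $c(i,1) = -2 c(i-1,1)$, hence $c(i,1) = (-2)^{i-1}$ and $\widehat{c}(i,1) = 2^{i-1}$, so that $F_1(x) = x/(1-2x)$, which is exactly the asserted formula when $j = 1$. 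A straightforward induction on $j$ using the functional equation then yields $F_j(x) = x^{j} / \prod_{i=1}^{j}(1 - i^{d}(i+1)x)$.

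The only point needing care, and the reason I treat $j = 1$ separately, is the boundary column $j = 0$: the quantity $\widehat{c}(i,0)$ is undefined, since its normalization would involve $((-1)!)^{d}$, so the $\widehat{c}$-recurrence cannot be invoked at $j = 1$. This is harmless because at the level of $c$ the offending term carries the factor $0^{d} = 0$ and simply drops out, which is what legitimizes computing the $j = 1$ row directly. Everything else is routine bookkeeping of signs and factorials.
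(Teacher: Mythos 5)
Your proof is correct and takes essentially the same route as the paper's: normalize $c_{\St_{r,d}}(i,j)$ to $\widehat{c}_{\St_{r,d}}(i,j)$, derive the two-term recurrence $\widehat{c}(i,j)=\widehat{c}(i-1,j-1)+(j+1)j^{d}\,\widehat{c}(i-1,j)$, and convert it into the functional equation $\left(1-j^{d}(j+1)x\right)F_j(x)=xF_{j-1}(x)$, solved by induction on $j$. Your explicit handling of the $j=1$ column (where the paper tacitly relies on $0^{d}=0$ and an implicit convention for the $j=0$ boundary) is a minor tightening of the same argument, not a different approach.
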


\begin{proof}
  The recursive formula for $c_{\St_{r,d}}(i,j)$ is
  \begin{equation}\label{case-d}
    c_{\St_{r,d}}(i,j) 
    = j(j-1)^d \cdot c_{\St_{r,d}}(i-1,j-1) - (j+1)j^d \cdot c_{\St_{r,d}}(i-1,j).
  \end{equation}
  Multiplying equation~(\ref{case-d}) by $\frac{(-1)^{i+j}}{j!((j-1)!)^d}$ yields
  \begin{equation}\label{case-dd}
    \widehat{c}_{\St_{r,d}}(i,j) 
    = \widehat{c}_{\St_{r,d}}(i-1,j-1)+(j+1)j^d\widehat{c}_{\St_{r,d}}(i-1,j).
  \end{equation}
  Let the generating function for the sequence
  $\{\widehat{c}_{\St_{r,d}}(i,j) \}_{i\geq 1}$ be $F_j(x)=\sum_{i\geq 1}\widehat{c}_{\St_{r,d}}(i,j)x^i$.
  Then equation~\eqref{case-dd} gives $F_j(x)=xF_{j-1}(x)+(j+1)j^dxF_j(x)$,
  which shows that
  $$F_j(x)=\frac{x^j}{\prod\limits_{i=1}^{j}\left(1-i^d(i+1)x\right)}.
  $$
\end{proof}

\section{Complete bipartite graphs}\label{complete}

In this section we consider the $r$ row shape
$\lambda=(k,\ldots, k)$. In other words, $\lambda$ is a rectangle
having $r$ rows and $k$ columns. The corresponding Ferrers graph is
the complete bipartite graph $K_{r,k}$. For a rectangular shape, the
recursive formula for $c_{\lambda}(i,j)$ simplifies to
$$c_\lambda(i,j) = j \cdot c_\lambda(i-1,j-1) - (j+1) \cdot c_\lambda(i-1,j).$$

\begin{prop}\label{prop:bipartite}
  For positive integers $r$ and $k$, we have
  $$\B(K_{r,k}) = \sum_{j=1}^{r}(-1)^{r-j}j!{ r+1 \brace j+1 }j^k,$$
  where ${ r+1 \brace j+1 }$ denotes a Stirling number of the
  second kind.
\end{prop}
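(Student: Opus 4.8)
The plan is to combine Theorem~\ref{thm:PowerSums} with an explicit closed form for the coefficients $c_\lambda(r,j)$ in the rectangular case. Since $\lambda_r = k$ here, Theorem~\ref{thm:PowerSums} already gives $\B(K_{r,k}) = \sum_{j=0}^r c_\lambda(r,j)\, j^k$, so everything reduces to identifying $c_\lambda(r,j)$. Comparing with the target expression term by term, I would conjecture and then prove that
\[ c_\lambda(r,j) = (-1)^{r-j}\, j!\, { r+1 \brace j+1 }. \]
Once this is established, substituting it into Theorem~\ref{thm:PowerSums} finishes the argument immediately: the $j=0$ summand contributes $c_\lambda(r,0)\cdot 0^k = 0$ (as $k \geq 1$), so the sum effectively runs from $j=1$ to $r$ and matches the claimed formula.

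To prove the closed form I would induct on $r$, using the simplified rectangular recursion $c_\lambda(i,j) = j\, c_\lambda(i-1,j-1) - (j+1)\, c_\lambda(i-1,j)$ displayed just before the statement. For the base case $i=1$ I would check directly that $(-1)^{1-j} j! { 2 \brace j+1 }$ reproduces the initial values $c_\lambda(1,0)=-1$, $c_\lambda(1,1)=1$, and $c_\lambda(1,j)=0$ otherwise, using ${ 2 \brace 1 } = { 2 \brace 2 } = 1$ and ${ 2 \brace m } = 0$ for $m \notin \{1,2\}$. For the inductive step, I would substitute the formula for $c_\lambda(i-1,\cdot)$ into the right-hand side of the recursion, track the sign changes (note $(-1)^{(i-1)-(j-1)} = (-1)^{i-j}$ while $(-1)^{(i-1)-j} = -(-1)^{i-j}$), and observe that the factor $j!$ absorbs the leading $j$ in the first term via $j\cdot(j-1)! = j!$. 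After factoring out the common $(-1)^{i-j} j!$, the claim reduces to showing
\[ { i \brace j } + (j+1){ i \brace j+1 } = { i+1 \brace j+1 }. \]

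The key step, and the only genuinely non-bookkeeping ingredient, is recognizing this last identity as precisely the standard recurrence ${ n+1 \brace m } = m { n \brace m } + { n \brace m-1 }$ for Stirling numbers of the second kind, taken with $n = i$ and $m = j+1$. This is exactly where the Stirling numbers enter, and the coefficient $(j+1)$ surviving from the rectangular recursion matches the multiplier $m = j+1$ in the Stirling recurrence. I do not anticipate any real obstacle beyond carefully aligning signs and indices, since the rectangular recursion is a clean two-term relation and the inductive verification is a direct computation.
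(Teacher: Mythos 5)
Your proposal is correct and follows essentially the same route as the paper: the paper's proof also sets $a(i,j)=(-1)^{i-j}j!\genfrac\{\}{0pt}{}{i+1}{j+1}$, checks that $a(1,j)=c_\lambda(1,j)$, and uses the standard Stirling recurrence $\genfrac\{\}{0pt}{}{i+1}{j+1}=\genfrac\{\}{0pt}{}{i}{j}+(j+1)\genfrac\{\}{0pt}{}{i}{j+1}$ to show $a$ satisfies the rectangular recursion, then concludes via Theorem~\ref{thm:PowerSums}. Your version merely spells out the sign bookkeeping and the vanishing of the $j=0$ term ($0^k=0$ for $k\geq 1$) that the paper leaves implicit.
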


\begin{proof}
  Let $a(i,j)=(-1)^{i-j}j!{ i+1 \brace j+1 }$. Trivially,
  $a(1,j)=c_{\lambda}(1,j)$. From the familiar recursion ${ i+1 \brace
    j+1 } = { i \brace j }+(j+1) \cdot { i \brace j+1 } $ for the
  Stirling numbers it follows that
  $$a(i,j) = j \cdot a(i-1,j-1) - (j+1) \cdot a(i-1,j).
  $$
  Thus $a(i,j)$ and $c_{\lambda}(i,j)$ satisfy the same recursion.
\end{proof}

Note that this result can also be obtained as a consequence of \cite[Section 5.2]{dpt},
where a similar result is proved for the bivariate chromatic polynomial.

\section{Computational complexity}\label{sec:complexity}

We conclude by addressing the computational complexity (that is, the cost) of calculating
$\B(\lambda)$ by means of a triangle, as in Theorem~\ref{thm:PowerSums}, in order to
illustrate the advantage of this method over calculating by edge recursion. More 
precisely, we show that, while calculating boolean numbers by edge recursion is
$\#P$-hard, calculating the boolean number of a Ferrers shape with $n$ cells using the
results from Section \ref{formula-general} requires $n^2/4 + O(n)$ multiplications. 

The following algorithm calculates the vector $\big( c_\lambda(r,j) \big)_{j=0}^r$.

\medskip

\noindent
{\smaller 1}\quad {\tt def} $\Gamma(\lambda_1,\dots,\lambda_r):$\\
{\smaller 2}\quad\mbox{}\qquad {\tt if} $r=1:$              \\
{\smaller 3}\quad\mbox{}\qquad\qquad {\tt return} $(-1,1)$  \\
{\smaller 4}\quad\mbox{}\qquad {\tt else}\,:                \\
{\smaller 5}\quad\mbox{}\qquad\qquad $(c_{-1}, c_{r}):=(0,0)$  \\
{\smaller 6}\quad\mbox{}\qquad\qquad 
$(c_0,\dots,c_{r-1}) := \Gamma(\lambda_1,\dots,\lambda_{r-1})$ \\
{\smaller 7}\quad\mbox{}\qquad\qquad $d_r\!:=\lambda_{r-1}-\lambda_r$\\
{\smaller 8}\quad\mbox{}\qquad\qquad {\tt return}
$\big(\;\,j(j-1)^{d_r}c_{j-1} -
(j+1)j^{d_r}c_{j} \;\,\big\vert\,\; j = 0,\dots,
r\,\;\big)$\\

We shall analyze the dominant factor in the running time of $\Gamma$,
the number of multiplications that it uses; let $f(\lambda)$ be that
number.  The multiplications are carried out in row 6 (the recursive
call) and row 8. To be precise, for $r>1$ we have
$$f(\lambda_1, \dots, \lambda_r) = f(\lambda_1, \dots, \lambda_{r-1}) 
+ 2(r+1)(d_r+1),$$
and for $r=1$ we have $f((\lambda_1))=0$. Solving this simple
recursion we get
\begin{align*}
  f(\lambda_1, \dots, \lambda_r) 
  = 2\sum_{i=2}^r(i+1)(d_i+1)
  &= 2\sum_{i=2}^rid_i + 2\sum_{i=2}^rd_i + 2\sum_{i=2}^r(i+1) \\
  &= 2\sum_{i=2}^rid_i + 2(\lambda_1-\lambda_r) + r^2+O(r).
\end{align*}

Because $d_i = \lambda_{i-1} - \lambda_i$, we have
$$ \sum_{i=2}^rid_i = \sum_{i=2}^ri\lambda_{i-1} 
     - \sum_{i=2}^r(i+1)\lambda_i + \sum_{i=2}^r\lambda_i = 2\lambda_1 - (r+1)\lambda_r + n - \lambda_1 
  =  \lambda_1 - (r+1)\lambda_r + n,$$
where $n=\lambda_1 +\dots+ \lambda_r$ is the total number of cells.  Thus
\begin{align*}
f(\lambda_1, \dots, \lambda_r) 
&= 2\big(\lambda_1 - (r+1)\lambda_r + n\big) 
+ 2(\lambda_1-\lambda_r) + r^2+O(r)\\
&= 2n + r^2 + O(r\lambda_r + \lambda_1).
\end{align*}

The graphs for $\lambda$ and for its transpose $\lambda'$ have the same
boolean number, so we can assume that $r \leq \lambda_1$. Then $r+\lambda_1 -
1 \le n$, and thus $r \le (n+1)/2$. Also, $r\lambda_r + \lambda_1<2n$. 
Thus,
$$f(\lambda) = 2n + (n+1)^2/4 + O(n) 
= n^2/4 + O(n).$$

\begin{cor}
  The number of multiplications needed to calculate $\B(\lambda)$ using
  Theorem~\ref{thm:PowerSums} is
  $$n^2/4 + O(n),$$
  where $n$ is the total number of
  cells of $\lambda$.
\end{cor}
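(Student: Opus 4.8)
The plan is to count the multiplications performed by the algorithm $\Gamma$, which returns the vector $\big(c_\lambda(r,j)\big)_{j=0}^r$, and then to add the cost of assembling the power sum $\sum_{j=0}^r c_\lambda(r,j)\cdot j^{\lambda_r}$ prescribed by Theorem~\ref{thm:PowerSums}. Writing $f(\lambda)$ for the number of multiplications used by $\Gamma$, I would first note that row 8 evaluates, for each $j=0,\dots,r$, the two products $j(j-1)^{d_r}c_{j-1}$ and $(j+1)j^{d_r}c_j$, each costing about $d_r+1$ multiplications (the exponentiation plus two factors), so the recursive call in row 6 is augmented by $2(r+1)(d_r+1)$. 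This yields the telescoping recurrence
$$f(\lambda_1,\dots,\lambda_r) = f(\lambda_1,\dots,\lambda_{r-1}) + 2(r+1)(d_r+1), \qquad f\big((\lambda_1)\big)=0,$$
where $d_i = \lambda_{i-1}-\lambda_i$.

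I would then unwind this to $f(\lambda) = 2\sum_{i=2}^r(i+1)(d_i+1)$, expand the product, and evaluate the only nontrivial piece, $\sum_{i=2}^r i\,d_i$, by summation by parts. Substituting $d_i=\lambda_{i-1}-\lambda_i$ and telescoping gives $\sum_{i=2}^r i\,d_i = \lambda_1 - (r+1)\lambda_r + n$, where $n=\lambda_1+\dots+\lambda_r$ is the number of cells. Feeding this back produces
$$f(\lambda) = 2n + r^2 + O(r\lambda_r + \lambda_1).$$

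The step I expect to carry the real content is the reduction that tames the term $r^2$. Since interchanging the two vertex classes of a Ferrers graph identifies $\F{\lambda}$ with the Ferrers graph $\F{\lambda'}$ of the transpose shape, the two graphs are isomorphic and hence share the same boolean number by Theorem~\ref{key}; I may therefore assume without loss of generality that $r \le \lambda_1$. This is precisely what is needed, for otherwise a single column $\lambda=(1,\dots,1)$ would have $r=n$ and the term $r^2$ alone would be of order $n^2$. Under the assumption $r\le\lambda_1$ one has $r+\lambda_1-1\le n$, so $r\le(n+1)/2$ and $r^2 \le (n+1)^2/4 = n^2/4 + O(n)$; the same inequality forces $r\lambda_r+\lambda_1 < 2n$, so the error term is $O(n)$.

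Finally I would account for the power-sum evaluation: computing the powers $j^{\lambda_r}$ and the products $c_\lambda(r,j)\cdot j^{\lambda_r}$ for $j=1,\dots,r$ costs $O(r\lambda_r)=O(n)$ additional multiplications, absorbed into the error term. Combining these estimates gives $f(\lambda)=n^2/4+O(n)$. The only delicate points are the constant-factor bookkeeping of how many multiplications each exponentiation truly requires, which does not affect the leading term $n^2/4$, and the appeal to transpose invariance; everything else is the routine algebra sketched above.
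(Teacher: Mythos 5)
Your proposal is correct and follows essentially the same route as the paper: the same recurrence $f(\lambda_1,\dots,\lambda_r)=f(\lambda_1,\dots,\lambda_{r-1})+2(r+1)(d_r+1)$ for the triangle algorithm, the same telescoping evaluation $\sum_{i=2}^r i\,d_i=\lambda_1-(r+1)\lambda_r+n$, and the same transpose reduction to $r\le\lambda_1$ yielding $r\le(n+1)/2$ and $f(\lambda)=n^2/4+O(n)$. Your one addition---explicitly charging $O(r\lambda_r)=O(n)$ multiplications for the final power-sum evaluation $\sum_j c_\lambda(r,j)\cdot j^{\lambda_r}$, which the paper leaves implicit---is a harmless refinement absorbed into the error term.
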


By comparison, calculating the boolean number using edge recursion is, in
general, $\#P$-hard. This follows from the relationship to the bivariate
chromatic polynomial, $\B(G) = P(G,0,-1)$, and from a result in \cite{H}
saying that computing $P(G,x,y)$ is $\#P$-hard, unless $y = 0$ or 
$(x,y) \in \{ (1,1),(2,2) \}$. 

\begin{rem}
The coefficients $c_{\lambda}(i,j)$ in the triangle for computing $\B(\lambda)$
grow exponentially in $i$ and polynomially in $j$, and the size of these numbers
affects the cost of the algorithm. We obtain an upper bound for the running time,
taking the size of the coefficients into account by making the following 
observations. First, the cost of multiplying two numbers is bounded by 
the product of their binary logarithms. Next, all the multiplications in the
algorithm are of the form $(j+1)j^{d_r}c_j$. We can make the bounds
$\log(j) < \log(j+1) \leq \log(n)$. Also, going through the algorithm, the
order of the size of $\log(c_{j})$ can be bounded by $O(n \log(n))$. Thus the cost
of each multiplication is bounded by $O(n \log(n))$, and the total cost of the
algorithm, accounting for the size of the coefficients, is bounded by $O(n^3 \log(n)^2)$.
\end{rem}

\end{document}